\documentclass[12pt]{article}
\usepackage{amscd,amsthm,amsmath,amsfonts,amssymb,verbatim} 
\usepackage{setspace}

\usepackage{color}
\usepackage{hyperref}
\hypersetup{colorlinks=true,linkcolor=red,}

\newtheorem*{remarks}{Remarks}
\newtheorem{conjecture}{Conjecture}

\theoremstyle{plain}
\newtheorem{thm}{Theorem}
\newtheorem{lem}{Lemma}
\newtheorem{prop}{Proposition}

\def\pp{{\rm PP }}
\def\p{\mathbb{P}}
\def\e{\mathbb{E} \, }

\def\pp{{\rm PP }}
\def\p{\mathbb{P}}
\def\e{\mathbb{E}\,}
\def\sigmabeta{\sigma_{\!\beta}}
\def\mark1{{\ddag}}   
\def\mrk2{{\ddag\ddag}}  
\newcommand{\re}{\textrm{Re}}

\begin{document}

\title{Large deviation asymptotics for a random variable with L\'{e}vy
  measure supported by $[0, 1]$}

\author{Richard Arratia \and  Fred Kochman \and Sandy Zabell}

\date{March 11, 2016}

\maketitle

\begin{abstract}
Asymptotics for Dickman's
number theoretic function $\rho(u)$, as $u \rightarrow \infty$,
were given de Bruijn and Alladi, and later in sharper form by Hildebrand and
Tenenbaum.  The
perspective in these works is that of analytic number theory.  
However, the function
$\rho(\cdot)$ also arises as a constant multiple of a certain probability
density connected with a scale invariant Poisson process, and we observe that
Dickman asymptotics can be interpreted as  
a Gaussian local limit theorem
for the sum of arrivals in a tilted Poisson process, combined with untilting. 

In this paper we exploit and extend  this reasoning to
obtain analogous asymptotic formulas for a class of functions
including, in addition to Dickman's function, the densities of random
variables having L\'{e}vy measure with support contained in $[0,1]$,
subject to mild regularity assumptions.
\end{abstract}

\clearpage
\tableofcontents
\clearpage

\section{Introduction}
\label{1}

Dickman's function $\rho$ is a basic function in analytic number
theory, see \cite[Section III.5]{T}.  It satisfies
\begin{equation}\label{eq1}
 \ \rho(u)=0 \textrm{ for } u<0, \  \rho(u)=1 \textrm{ for } 0\leq u \leq 1,  
\end{equation}
\begin{equation}\label{eq2}
u \rho (u)=\int_{u-1}^u \rho(t) dt \qquad \textrm{for all real } u.
\end{equation}
Write  $\psi(x,y)$ for 
 the number of positive integers $n \leq x$, all of
whose prime factors $p$ satisfy $p \leq y$. In 1930 Dickman
\cite{dickman} showed that for $u>1$,
\[
\rho(u)=\lim_{x \rightarrow \infty} \frac{1}{x}\psi(x, x^{1/u}).
\]
 
Armed with \eqref{eq1} and \eqref{eq2}, a suitable Fourier
representation formula, and the method of steepest descent, Hildebrand
and Tenenbaum \cite{H-T} reprove the classic result of de Bruijn
(1951) and Alladi (1982), see \cite{T},  
that as $u \to \infty$,
\begin{equation}\label{eq3}
  \rho(u)=\sqrt{\frac{\beta^\prime(u)}{2
      \pi}}e^{\gamma-u\beta+C(\beta)}\left\{1+O\left(\frac{1}{u}\right)\right\},
\end{equation}
where $\gamma$ is Euler's constant, $\beta=\beta(u)$ is defined by the formula
\[
e^\beta=1+u \beta,
\] 
and 
\[
C(\beta)=\int_0^\beta \frac{e^t-1}{t} dt=\int_0^1 \frac{e^{\beta t}-1}{t} dt.
\]
(Alladi had improved on the earlier result  of de Bruijn.)

Now consider the scale invariant Poisson process on $(0,\infty)$, with
intensity $(1/x) \, dx$; see \cite{DIMACS}.  Restricting to $(0,1]$,
  we have a Poisson process whose
  arrivals may be labeled in decreasing order, with $1 \ge X_1 > X_2 >
  \cdots >0$, and the sum of these arrivals, $T:= X_1 + X_2 + \cdots$,
  is the random variable characterized by its
moment generating function,
\begin{equation}\label{Dickman M}
  \e e^{\beta T} = \exp\left( \int_0^1 \frac{e^{\beta x} -1}{x}
  \, dx \right).
\end{equation}
Size biasing, see \cite{AGK}, makes it easy to see that 
probability density $f$ of $T$ in place of $\rho$ satisfies \eqref{eq2}.
Obviously $f$ is zero on $(-\infty,0)$, and scale invariance shows
that $f$ is constant on $(0,1)$.  From this it follows that $f$ must
be a constant multiple of $\rho$, and knowing
\[
\int_0^\infty \rho(u) du =e^\gamma,
\]
(see for example \cite[Formulas 5.45 and 5.43]{T})
one sees that
\begin{equation}\label{eq4}
 f (u) =e^{-\gamma} \rho(u).
\end{equation}
Thus we can rewrite \eqref{eq3} as a statement of the asymptotic decay
of the density $f(u)$ as $u \to \infty$ (note Euler's $\gamma$ no
longer appears):
\begin{equation}\label{eq3 f}
f(u)=\sqrt{\frac{\beta^\prime(u)}{2
    \pi}}e^{-u\beta+C(\beta)}\left\{1+O\left(\frac{1}{u}\right)\right\}.
\end{equation}
The function $C$ appearing in these formulas is the cumulant
generating function for $T$, and the similarity to the  large deviation
results of Cram\'{e}r, Chernoff, and their successors (see, for
example,  \cite{Chernoff}) may be
evident.

In this paper, we prove results similar to  
\eqref{eq3 f} for a broader class, those infinitely divisible
distributions whose L\'evy measure is supported on [0,1], subject to
additional mild regularity conditions.  From the perspective of a
probabilist, the novelty of this paper is the derivation of a local
limit theorem, Proposition \ref{new main prop}, for a general case
other than that of classical sums of i.i.d.\ variables, informally
Cram\'{e}r $\beta$-tilts $T_\beta$ of a fixed random variable $T$ as
$\beta \to \infty$.  Via untilting, this leads to asymptotic formulas,
given in Theorems \ref{new theorem 1} and \ref{new theorem 2}, for the
density $f(u)$ as $u \to \infty$, along with a matching asymptotic
formula for the upper tail probability $P(T \ge u)$, for a fixed
random variable $T$ in the class L\'evy [0,1], as defined in Section
\ref{sect levy}.  Our adaptation of the arguments from  
\cite{H-T} and \cite{T} eliminates, at a certain point,
the use of some
Whittaker--Watson species of special function theory which applies
only to the Dickman case,
and substitutes a more robust method.

An important example (covered by our Theorem \ref{new theorem 2}) showing how  variants of \eqref{eq3} arise
naturally
comes from making a ``minor'' change in \eqref{Dickman M}, simply
changing the lower limit of the integral from 0 to  
$a \in
(0,1)$, to get $T \equiv T^{(a)}$ with distribution characterized 
by
\begin{equation}\label{ABT M}
   \e e^{\beta T} = \exp\left( \int_a^1 \frac{e^{\beta x} -1}{x}
  \, dx \right).
\end{equation}
This arises in the study of random permutations, see \cite[Section
  4.3]{ABT book}.   Directly, $f(1)$ governs the asymptotic
probability that a random permutation of $n$ objects has only cycles
of length at least $an$.  Scale invariance leads to $\omega(u)  =
f(1)$ for the case $a=1/u$, with Buchstab's function $\omega$
governing
integers free of small prime factors;  see \cite[Section III.6]{T}.
Scale invariance also leads, for fixed $a \in (0,1)$, to $f(u)$
governing the probability that a random permutation of $n$ objects has
only cycles with lengths in   
$(\frac{a }{u} n,\frac{1}{u}n]$, 
for any $u>1$.

\subsection{L\'evy($\mu$), L\'evy [0,1]}\label{sect levy}

  Let $\mu$ be a nonnegative measure on $(0,\infty)$, such that 
$\int x \mu(dx) \in (0,\infty)$.
We say that the distribution of $T$ is {\bf L\'evy($\mu$)}
  if
\begin{equation}\label{C from mu}
    C(\theta) := \log \e e^{\theta T} =  \int (e^{\theta x}-1) \mu(dx),
\end{equation}
that is, if $T$ has the infinitely divisible distribution
with L\'evy measure $\mu$.  Informally, $T$ is the sum of all arrivals, in the 
Poisson process on $(0,\infty)$ with intensity $\mu(dx)$. 
(There are other infinitely divisible distributions L\'evy($\mu$)
with L\'evy measure supported by $(0,\infty)$, having infinite mean;
by restricting to the finite mean case, we gain both a simplified form for
the L\'evy measure, and the use of moment generating functions, rather
than needing characteristic functions, to specify the distribution.)

We are mainly interested in the case when
the support of $\mu$ is bounded.  Without loss of generality, by
rescaling,  the support of $\mu$ is contained in $[0,1]$, and in
that case, we say that the distribution of $T$ is in the set \mbox{L\'evy [0,1]}.
Obviously, if
the support of $\mu$ is contained in a bounded interval, then
 $\e e^{\theta T}< \infty$
for all $\theta$, and $C$ is defined everywhere.

\subsection{Regularity conditions}

The first regularity condition we impose is that $\mu$ have a density
with respect to Lebesgue measure, say $\mu(dx) = g(x) \, dx$, so that
the distribution of $T$ is determined by
\begin{equation}\label{use g}
   C(\theta) := \log \e e^{\theta T} = \int_0^1 (e^{\theta x}-1) g(x)
   \, dx .
\end{equation}
In this context, the requirement $\int x \, \mu(dx) \in (0,\infty)$
reduces to $\int_0^1 x \, g(x) \, dx \in (0,\infty)$.    Other regularity
conditions are imposed, as needed, for the proofs of Theorems 1 and 2
stated below.  

 There are 
two qualitatively distinct cases, according to whether $\mu((0,1])$ is
finite or infinite.  In the first case,  $ \lambda := \int_0^1
g(x) \, dx < \infty$, so the total
number of arrivals is Poisson distributed with parameter
$\lambda$, 
$\p(T=0)=e^{-\lambda} > 0$, and  the distribution of $T$ has a 
\emph{defective} density $f$, with $\int f(x) dx = 1 -
e^{-\lambda} < 1$.

In the second case, $ \int_0^1 g(x) \, dx = \infty$, and it is not
hard to show that
 the distribution of $T$ has a 
\emph{proper} density $f$, with $\int f(x) \, dx =1$.
An example of the first case is given by \eqref{ABT M}, 
 with  $g(x) = 1/x$ on (a,1] and $a \in (0,1)$, 
and $g(x) = 0 $ on $[0,a]$; and an 
example of this second case, relating to the Dickman function, 
is the density $f = e^{-\gamma} \rho$
in 
\eqref{eq3 f}, with  $g(x) = 1/x$ on (0,1].    

The two cases need different additional
regularity assumptions.     
For the first case, 
with a finite number of Poisson arrivals,  
our main result is given as Theorem \ref{new theorem 1},
which approximates the density $f(u)$ with an
$O(1/u)$ upper bound on the  relative error, as in the
de~Bruijn--Alladi result, and  approximates the upper tail probability
$P(T \ge u)$
with a  $O(1/\sqrt{u})$
upper bound on the relative error. The proof, in Section \ref{5}, of   
Theorem \ref{new theorem 1}
relies on a small amount of Fourier analysis, together with the result
of Proposition~\ref{new main prop}, from Section \ref{3}, which approximates a tilted density 
$f_\beta(t)$ with a uniform bound on the \emph{additive} error.  The proof of 
Proposition~\ref{new main prop} is the most difficult part of this paper,
requiring
estimates for \emph{four} 
different zones of integration.

For the second case, with an infinite number of arrivals,
 arguments requiring no further Fourier analysis will be given, in Section \ref{sect end run},  
letting  us derive Theorem \ref{new theorem 2} which gives
 a  $O(1/\sqrt{u})$
upper bound on the relative error for both the density and the upper
tail probability.

\subsection{Statement of main theorems}

For any integer $k \ge 1$, we say that a function $g(x)$ is 
{\bf piecewise} $\mathbf{C^k}$ on an interval
$[a,1]$, if $[a,1]$ is partitioned into finitely many subintervals on
whose interiors $g$ is $C^k$, and $g$ and all $k$ derivatives all possess finite one-sided
limits at all the endpoints. So in particular, $g$ is bounded on $[a,1]$. We will usually focus
on piecewise $C^2$.

\begin{thm}\label{new theorem 1} 
Assume that the non-negative function $g(x)$ defined on $[0,1]$ is piecewise $C^2$ on $[0,1]$,
and that for some $\epsilon>0$,
we have
\begin{equation}\label{eq101}
g(x)\geq \epsilon \textrm{ on } \left[1-\epsilon,1\right].
\end{equation}
Let the distribution of $T$ be given by \eqref{use g} and let $f$ be
the defective density function for $T$. Given $u>0$, let $\beta
=\beta(u)$ be such that $C'(\beta) =u$; let $\sigma_\beta^2
=C''(\beta)$.  Then as $u \rightarrow \infty$
\begin{equation}\label{eq2032}
f(u)= \frac{e^{C(\beta)-u\beta }}{\sqrt{2 \pi}\,\sigmabeta}
\left(1+O\left(\frac{1}{u}\right)\right)
\end{equation}
and
\begin{equation}\label{tail asymp}
 P(T \ge u) = \frac{1}{\beta} \, \frac{e^{C(\beta)-u\beta }}{\sqrt{2
     \pi}\, \sigmabeta}
 \left(1+O\left(\frac{1}{\sqrt{u}}\right)\right)
\end{equation}
and hence $ P(T \ge u)= (f(u)/\beta) \,
\left(1+O\left(\frac{1}{\sqrt{u}}\right)\right).$
\end{thm}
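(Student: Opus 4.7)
The plan is to use Cram\'er-style exponential tilting together with the local limit theorem supplied by Proposition \ref{new main prop}. First, introduce the tilted density
\begin{equation*}
f_\beta(t) := e^{\beta t - C(\beta)} f(t),
\end{equation*}
which, for a given $\beta>0$, is a probability density with mean $C'(\beta)$ and variance $\sigmabeta^2 = C''(\beta)$. For the fixed $u > 0$, choose $\beta = \beta(u)$ so that the tilted mean equals $u$. Inverting the tilt gives the two identities
\begin{equation*}
f(u) = e^{C(\beta) - \beta u}\, f_\beta(u), \qquad P(T \geq u) = e^{C(\beta) - \beta u}\int_0^\infty e^{-\beta s} f_\beta(u+s)\,ds,
\end{equation*}
the second after the substitution $s = t - u$. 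All remaining work is to approximate $f_\beta$ at, and just above, its mean.

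For the density asymptotic \eqref{eq2032}, I would apply Proposition \ref{new main prop} at the centre $t = u$: its uniform additive bound, sharpened by the ``small amount of Fourier analysis'' alluded to in the introduction, should give $f_\beta(u) = (2\pi\sigmabeta^2)^{-1/2}(1 + O(1/u))$, and multiplication by $e^{C(\beta)-\beta u}$ is then \eqref{eq2032}. For the tail \eqref{tail asymp}, substitute the Gaussian approximation $f_\beta(u+s) = \phi_\beta(s) + r_\beta(s)$, where $\phi_\beta(s) := (2\pi\sigmabeta^2)^{-1/2}\exp(-s^2/(2\sigmabeta^2))$ and $\|r_\beta\|_\infty$ is the uniform error furnished by the Proposition. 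The main integral evaluates to
\begin{equation*}
\int_0^\infty e^{-\beta s}\phi_\beta(s)\,ds = \frac{1}{\sqrt{2\pi}\,\sigmabeta\,\beta}\Bigl(1 + O\bigl(1/(\beta\sigmabeta)^2\bigr)\Bigr),
\end{equation*}
because $e^{-\beta s}$ concentrates on scale $s \sim 1/\beta$, on which the Gaussian quadratic is negligible once $\beta\sigmabeta \to \infty$. The residual obeys $|\int_0^\infty e^{-\beta s} r_\beta(s)\,ds| \leq \|r_\beta\|_\infty/\beta$, which, relative to the leading term, is an $O(\|r_\beta\|_\infty\,\sigmabeta)$ correction. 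The ``and hence'' assertion is then immediate upon dividing \eqref{tail asymp} by \eqref{eq2032}, since the $O(1/u)$ density error is absorbed by the larger $O(1/\sqrt{u})$ tail error.

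The main obstacle is matching the error scales: one has to verify both that the bound of Proposition \ref{new main prop} sharpens to $O(1/u)$ \emph{relative} error at the single point $t = u$ (where a brief Fourier-inversion argument uses the smoothness of the characteristic function of $f_\beta$ at the origin), and that the same uniform additive bound yields $O(\|r_\beta\|_\infty\,\sigmabeta) = O(1/\sqrt{u})$ for the tail. Both reductions rest on the auxiliary estimates $\beta(u) \to \infty$ and $\sigmabeta^2 \asymp u$ as $u \to \infty$. This is where the regularity condition \eqref{eq101} is used: the lower bound $g \geq \epsilon$ on $[1-\epsilon, 1]$ ensures that $C'(\beta) = \int_0^1 x e^{\beta x} g(x)\,dx$ grows at the right exponential rate in $\beta$, pinning down the saddle point $\beta(u)$ and the scaling $\sigmabeta \asymp \sqrt{u}$, and thereby converting the Proposition's uniform additive bound into the stated relative errors.
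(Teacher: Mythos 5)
Your proposal is correct and follows essentially the same route as the paper: untilt via $f(u)=e^{C(\beta)-\beta u}f_\beta(u)$ and $P(T\ge u)=e^{C(\beta)-\beta u}\int_0^\infty e^{-\beta s}f_\beta(u+s)\,ds$, then feed in the Gaussian local limit approximation for $f_\beta$ from Proposition~\ref{new main prop}, with $\sigmabeta^2\asymp u$ converting the additive errors into the stated relative ones. One small remark: the $O(1/u)$ relative error at the center needs no separate Fourier argument beyond the Proposition, since its displayed $y=0$ case already gives $f_Y(0)=\tfrac{1}{\sqrt{2\pi}}+O(1/E_\beta)$; and the hypothesis \eqref{eq101} is needed mainly inside the proof of Proposition~\ref{new main prop} (for the Zone 1 and 2 bounds via Lemma~\ref{L2}), not merely to pin down the scaling of $\beta(u)$ and $\sigmabeta$.
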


\begin{thm}\label{new theorem 2}     
Fix $a \in (0,1/4]$.  Let the density function $g: (0,1] \to [0,\infty)$
satisfy:  the restriction of $g$ to $[a,1]$ is   
  piecewise-$C^2$,  $\sup_{0<x \le 1} x g(x) < \infty$, and
$g(x) \ge \epsilon $ on $[1-\epsilon,1]$ for some
  $\epsilon > 0$.
 Given $u>0$, let $\beta
 =\beta(u)$ be such that $C'(\beta) =u$; let $\sigma_\beta^2
 =C''(\beta)$.
Let $f$ be the (possibly defective) density function for $T$.
Then as $u \rightarrow \infty$ 
\begin{equation}\label{eq2032 kludge}
f(u)= \frac{e^{C(\beta)-u\beta }}{\sqrt{2 \pi}\,\sigmabeta}
\left(1+O\left(\frac{1}{\sqrt{u}}\right)\right)
\end{equation}
and
\begin{equation}\label{tail asymp kludge}
 P(T \ge u) = \frac{1}{\beta} \, 
\frac{e^{C(\beta)-u\beta }}{\sqrt{2 \pi}\, \sigmabeta}
\left(1+O\left(\frac{1}{\sqrt{u}}\right)\right)
\end{equation}
and  hence  $ P(T \ge u)= f(u)/\beta \, 
\left(1+O\left(\frac{1}{\sqrt{u}}\right)\right).$
\end{thm}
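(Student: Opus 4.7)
The plan is to execute an ``end run'' that avoids further Fourier analysis by splitting the L\'evy measure at $a$ and invoking Theorem~\ref{new theorem 1}. Set $g_1:=g\cdot\mathbf{1}_{[a,1]}$ and $g_2:=g\cdot\mathbf{1}_{(0,a)}$, and let $T_1,T_2$ be independent with distributions L\'evy$(g_1\,dx)$ and L\'evy$(g_2\,dx)$, so $T\stackrel{d}{=}T_1+T_2$. Since $\sup_x xg(x)<\infty$ and $x\ge a$ on $\mathrm{supp}(g_1)$, $g_1$ is integrable and $T_1$ falls in the finite-arrival setting; the piecewise-$C^2$ hypothesis on $[a,1]$ extends (with a jump at $a$) to piecewise-$C^2$ on $[0,1]$; and $g_1\ge\epsilon$ on $[1-\epsilon,1]$ persists after possibly shrinking $\epsilon$ so that $\epsilon<1-a$. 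Thus Theorem~\ref{new theorem 1}, and in particular the underlying local limit theorem Proposition~\ref{new main prop} for the tilted density $f_{1,\beta}$, apply to $T_1$.

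For $\beta=\beta(u)$ defined by $C'(\beta)=u$, I would use the identity $f(u)=e^{C(\beta)-u\beta}f_\beta(u)$ with $f_\beta$ the density of the $\beta$-tilted $T_\beta$. Tilting commutes with the splitting, so $T_\beta\stackrel{d}{=}T_{1,\beta}+T_{2,\beta}$ (independent), where $T_{i,\beta}$ has L\'evy density $e^{\beta x}g_i(x)$; writing $C=C_1+C_2$, the means $u_i=C_i'(\beta)$ sum to $u$ and the variances $\sigma_{i,\beta}^2=C_i''(\beta)$ sum to $\sigma_\beta^2$. The density of the sum is the convolution; the contribution from the atom of $T_{1,\beta}$ at $0$ is super-polynomially small (it would require $T_{2,\beta}$ to take the value $u$, which is $u_1$ above its mean $u_2$, with $u_1\gg\sigma_{2,\beta}$). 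So up to such negligible error, and invoking Proposition~\ref{new main prop} for the Gaussian approximation to $f_{1,\beta}$,
\[
f_\beta(u)=\mathbb{E}\bigl[f_{1,\beta}(u-T_{2,\beta})\bigr]=\frac{1}{\sqrt{2\pi}\,\sigma_{1,\beta}}\,\mathbb{E}\!\left[e^{-(T_{2,\beta}-u_2)^2/(2\sigma_{1,\beta}^2)}\right]+O\bigl(\|R_\beta\|_\infty\bigr),
\]
where $R_\beta$ is the uniform additive remainder from the proposition and I have used $u-u_1=u_2$.

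Because the L\'evy measure of $T_{2,\beta}$ lies in $(0,a]$ with $a\le 1/4$, Laplace's method at $x=a$ gives $u_2$, $\sigma_{2,\beta}^2$, and all central moments $\mathbb{E}(T_{2,\beta}-u_2)^{2k}$ of order $e^{\beta a}$ up to polynomial factors, whereas $\sigma_{1,\beta}^2$ is of order $e^\beta$. Hence $\sigma_{2,\beta}^2/\sigma_{1,\beta}^2$ and $\mathbb{E}(T_{2,\beta}-u_2)^{2k}/\sigma_{1,\beta}^{2k}$ are $O(e^{-c\beta})$ for some $c>0$; Taylor-expanding the exponential,
\[
\mathbb{E}\!\left[e^{-(T_{2,\beta}-u_2)^2/(2\sigma_{1,\beta}^2)}\right]=1-\frac{\sigma_{2,\beta}^2}{2\sigma_{1,\beta}^2}+O(e^{-2c\beta})=\frac{\sigma_{1,\beta}}{\sigmabeta}\bigl(1+O(e^{-c\beta})\bigr),
\]
so $f_\beta(u)=(\sqrt{2\pi}\,\sigmabeta)^{-1}\bigl(1+O(\sigmabeta\|R_\beta\|_\infty)+O(e^{-c\beta})\bigr)$. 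The $\|R_\beta\|_\infty$ term, inherited from Theorem~\ref{new theorem 1}, produces the claimed $O(1/\sqrt{u})$ relative error after untilting, giving \eqref{eq2032 kludge}. For the tail \eqref{tail asymp kludge}, I would integrate the density: using $\frac{d}{ds}(C(\beta(s))-s\beta(s))=-\beta(s)$ together with \eqref{eq2032 kludge}, an integration-by-parts argument gives $\int_u^\infty f(s)\,ds=(f(u)/\beta(u))(1+O(1/\sqrt{u}))$.

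The main obstacle is converting the Proposition's uniform remainder $\|R_\beta\|_\infty$ into the claimed $O(1/\sqrt{u})$ relative error after the convolution, which requires careful bookkeeping of the scales $\sigmabeta$, $\beta$, and $u$ in the Proposition's estimate. The role of the restriction $a\le 1/4$ is to guarantee that $u_2/\sigma_{1,\beta}$ is super-polynomially small, so that evaluating $f_{1,\beta}$ at $u-T_{2,\beta}$ (a distance $\sim u_2$ from its peak at $u_1$) loses no Gaussian factor to leading order; with $a$ close to or above $1/2$, this device would break down.
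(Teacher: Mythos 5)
Your decomposition is exactly the paper's: you split $T = T_1 + T_2$ with $T_1$ the sum of arrivals in $[a,1]$ (the paper's $T^{\ddag\ddag}$, to which Proposition~\ref{new main prop} applies) and $T_2$ the sum of arrivals in $(0,a)$ (the paper's $T^\ddag$), then untilt, write the density of the sum as a convolution, apply the local limit theorem for the tilted $T_1$, and show the convolution with $T_{2,\beta}$ only perturbs the answer by $O(1/\sqrt{u})$ relative error. The role of Lemma~\ref{lemma size bias} and Lemma~\ref{easy ueub}, and of the atom of $T_{1,\beta}$ at $0$, also match.

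The one genuine methodological difference is how you process the convolution. You Taylor-expand $\mathbb{E}\big[e^{-(T_{2,\beta}-u_2)^2/(2\sigma_{1,\beta}^2)}\big]$, an $L^2$-type estimate controlled by $\sigma_{2,\beta}^2/\sigma_{1,\beta}^2=O(u^{a-1})$ (up to logs) and a fourth-moment remainder. The paper instead works in $L^\infty$: it cuts the range of $z=T_{2,\beta}$ to $[0,4u^\ddag]$ by Lemma~\ref{easy ueub}, bounds the Gaussian's argument $y=(t-z-u^{\ddag\ddag})/\sigmabeta^{\ddag\ddag}$ by $O(u^{a-1/2})$ uniformly over that range, and dismisses $z>4u^\ddag$ as $o(1/u)$. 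Your mean-square bound is sharper ($O(u^{a-1})$ versus the paper's $O(u^{2a-1})$), which is why, as you note, your variant would tolerate any $a<1/2$. But this means your stated explanation of the hypothesis $a\le 1/4$ is off: $u_2/\sigma_{1,\beta}\sim u^{a-1/2}$ is only polynomially small, not ``super-polynomially''; and $a\le 1/4$ is needed by the \emph{paper's} sup-over-$z$ bound $y^2=O(u^{2a-1})$, not by your averaged one.

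Two bookkeeping points. First, the remainder from Proposition~\ref{new main prop} enters $f_{1,\beta}$ as $\sigma_{1,\beta}^{-1}R_\beta$, not $R_\beta$; as written, your display has additive error $O(\|R_\beta\|_\infty)$ and hence relative error $O(\sigmabeta\|R_\beta\|_\infty)=O(1)$, which is useless. The correct accounting is additive error $O(\|R_\beta\|_\infty/\sigma_{1,\beta})=O(1/u)$, hence relative error $O(\|R_\beta\|_\infty)=O(1/\sqrt{u})$. You flag this issue yourself, so it is a slip rather than a conceptual gap. Second, the $2k$-th central moment of $T_{2,\beta}$ is not ``of order $e^{\beta a}$'' for $k\ge 2$; it is of order $\sigma_{2,\beta}^{2k}\sim e^{k a\beta}$. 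Fortunately the ratio to $\sigma_{1,\beta}^{2k}$ is still $O(e^{k(a-1)\beta})$, so the needed estimate survives.

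Finally, your tail argument — integrating $f$ over $s\ge u$ and integrating by parts using $\frac{d}{ds}(C(\beta(s))-s\beta(s))=-\beta(s)$ — is a valid alternative but not what the paper does: the paper proves the two-sided estimate for $f_\beta(t)$ on $[u,u+1]$ and a one-sided bound $\sigmabeta f_\beta(t)\le 1+O(1/\sqrt{u})$ for all $t\ge u$, and then reruns, for fixed $\beta=\beta(u)$, the calculation $\int_0^\infty e^{-\beta\sigmabeta y}f_Y(y)\,dy$ from the proof of Theorem~\ref{new theorem 1}. Both routes reach \eqref{tail asymp kludge}.
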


\begin{remarks}{\normalfont{

1)  The density $g$ can, more generally, be assumed to  have bounded support
    anywhere in the non-negative reals; it is only a useful simplifying
    normalization made here to
    place the support in  $\left[0,1\right]$ with the upper limit of
    supp($g$) equal, in fact, to 1.  So $g$ possesses a discontinuity
    at $x=1$, if nowhere else.

2) {\em Some} condition on the possible growth of $g$ at $0+$ is
necessary for the validity of a strong error term in a local limit
result such as \eqref{eq202} in the underlying
Proposition \ref{new main prop}; 
but the
natural candidate, $\int_0^1 x g(x) \ dx < \infty$, satisfied by $g(x)
= 1/x$ in the Dickman case, does not of itself suffice.  (For example,
fix $0 < \theta < 1$ and let $g(x) dx = \theta \frac{dx}{x}$ on
$(0,1]$.  The density $f$ of the corresponding $T$ satisfies, for all
    $0<x \le 1$, $f(x)=x^{\theta-1} f(1)$, as follows immediately from
    scale invariance, together with the fact that $P($no arrivals in
    $(x,1] ) = \exp(-\int_x^1 g(x)dx)) = x^\theta$.  Thus $f$ is
  unbounded, and the same is true after any tilt and standardization.
  Hence the uniform error estimate in~\eqref{eq202} must fail.)

3)  The boundedness away from 0 at the rightmost boundary of supp$(g)$
    ensures that the tilted measure $e^{\beta x} g(x) dx$ will have
    unbounded mass near 1 as the tilting parameter $\beta$
    increases.  
However, we  believe that our proof could be
    extended to cover the more general case not assuming  
    \eqref{eq101}.  
    
4) We
   conjecture 
 the \emph{conclusions} of 
Proposition \ref{new main prop} 
itself remain
   valid under a far weaker set of hypotheses than any considered here.
In fact, we believe it suffices to assume the intensity measure
   $\nu$ of the Poisson process  $\pp(\nu)$ has a
   non-trivial absolutely continuous part, and the sum $T$ of arrivals
   have finite mean value; see Section~\ref{sect conclusions} for an explicit statement. 
 
}}

\end{remarks}

\section{An easy bulk CLT}

Although the result proved in this section, Theorem \ref{thm easy}, is an easy
exercise, it serves well to introduce our notation for Cram\'{e}r tilts, and
to show how our local limit results are much more delicate than
the bulk central limit theorem; see also Conjecture
   \ref{conjecture 1} in Section~\ref{sect conclusions}.  We give an
example, after  Theorem \ref{thm easy}, to show that the 
conclusion of the theorem can fail without the hypothesis that the support of $\mu$ is bounded.   

For $\beta \in (-\infty,\infty)$, let $T_\beta$ be distributed as the Cram\'{e}r 
$\beta$-tilt of $T$, that is, $\p(T_\beta \in dx) = \p(T \in dx) \, e^{\beta
  x} / \e e^{\beta T}$, so that with $C_\beta$ defined by 
$C_\beta(\cdot):=C(\cdot +\beta)-C(\beta)$, the distribution of $T_\beta$ has
cumulant generating function $C_\beta$.  Thus, for our $T$ as given by
\eqref{C from mu},
\begin{equation}\label{cramer} 
\e e^{\theta T_\beta} = \exp(C_\beta(\theta))  
  =  \exp\left( \int (e^{\theta x}-1) \, e^{\beta x} \mu(dx)\right).
\end{equation}
Informally, $T_\beta$  is the sum of all arrivals, in the 
Poisson process on $(0,1]$ with intensity $e^{\beta x} \mu(dx)$.

\begin{thm}\label{thm easy}
Let $T$ be distributed as per \eqref{C from mu}, with $\mu$ supported
by $(0,1]$ and $\int x \mu(dx) \in (0,\infty)$.
Consider the Cram\'{e}r tilts of $T$, as given by \eqref{cramer}.
As $\beta$ grows so that $\e T_\beta \to \infty$,
$(T_\beta - \e T_\beta)/\sqrt{ {\rm Var }\,  T_\beta}$
converges in distribution to the standard normal.
\end{thm}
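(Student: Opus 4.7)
The plan is to verify L\'evy's continuity theorem by computing the characteristic function of $Y_\beta := (T_\beta - \mathbb{E} T_\beta)/\sigma_\beta$ directly from \eqref{cramer}. Since $T_\beta$ is infinitely divisible with L\'evy measure $e^{\beta x}\mu(dx)$ supported on $(0,1]$, its characteristic function is obtained by analytic continuation of \eqref{cramer}:
\[
\mathbb{E} e^{i\theta T_\beta} = \exp\!\left(\int (e^{i\theta x}-1)\, e^{\beta x}\, \mu(dx)\right),
\]
and $\mathbb{E} T_\beta = C'(\beta)$, $\sigma_\beta^2 = \mathrm{Var}\, T_\beta = C''(\beta)$.

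Setting $\theta = t/\sigma_\beta$ and Taylor-expanding the complex exponential gives
\[
\log \mathbb{E} e^{it Y_\beta} = -\frac{it C'(\beta)}{\sigma_\beta} + \int\!\left(\frac{itx}{\sigma_\beta} - \frac{t^2 x^2}{2\sigma_\beta^2} + R_\beta(x,t)\right) e^{\beta x}\mu(dx),
\]
where the cubic Taylor remainder satisfies $|R_\beta(x,t)| \le |tx/\sigma_\beta|^3/6$. The linear term integrates to $it C'(\beta)/\sigma_\beta$ and cancels the mean correction, while the quadratic term integrates to exactly $-t^2/2$ since $\int x^2 e^{\beta x}\mu(dx) = C''(\beta) = \sigma_\beta^2$. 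So it suffices to show that the integrated remainder vanishes in the limit.

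Using $\mathrm{supp}(\mu) \subset (0,1]$ one has $x^3 \le x^2$, so
\[
\left|\int R_\beta(x,t)\, e^{\beta x}\mu(dx)\right| \le \frac{|t|^3}{6\sigma_\beta^3}\int x^2 e^{\beta x}\mu(dx) = \frac{|t|^3}{6\sigma_\beta},
\]
and what remains is to check $\sigma_\beta \to \infty$ under the hypothesis $C'(\beta) \to \infty$. Let $x^\ast := \sup\,\mathrm{supp}(\mu) \in (0,1]$, and consider the size-biased tilt $\nu_\beta(dx) := x e^{\beta x}\mu(dx)/C'(\beta)$, a probability measure on $(0,1]$ for which $C''(\beta)/C'(\beta) = \int x\, \nu_\beta(dx)$. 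For any $\delta > 0$, the $\nu_\beta$-mass of $(0, x^\ast - \delta]$ is bounded by $e^{\beta(x^\ast - \delta)} \int_{(0, x^\ast -\delta]} x\, \mu(dx)$ divided by $C'(\beta) \ge (x^\ast - \delta/2)\, e^{\beta (x^\ast - \delta/2)}\, \mu((x^\ast - \delta/2, x^\ast])$; the denominator is strictly positive since $x^\ast \in \mathrm{supp}(\mu)$, and the ratio is $O(e^{-\beta\delta/2}) \to 0$. Hence $\nu_\beta$ concentrates at $x^\ast$, $C''(\beta)/C'(\beta) \to x^\ast > 0$, and $\sigma_\beta^2 \to \infty$.

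The main (and essentially only) delicate point is this verification that $\sigma_\beta \to \infty$: the hypothesis $C'(\beta)\to\infty$ does not by itself imply $C''(\beta)\to\infty$, but compactness of the support together with $x^\ast \in \mathrm{supp}(\mu)$ forces the size-biased tilt to concentrate at $x^\ast$ and yields the required control. Given this, the pointwise convergence $\log \mathbb{E} e^{it Y_\beta} \to -t^2/2$ follows for each real $t$, and L\'evy's continuity theorem delivers the claimed convergence in distribution to the standard normal.
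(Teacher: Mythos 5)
Your proof is correct, but it takes a genuinely different route from the paper's. The paper constructs a triangular array (the $n$th row being $m(n)=\lceil u(\beta(n))\rceil$ i.i.d.\ pieces of $T_{\beta(n)}$, each itself a L\'evy[0,1] variable of small mean) and verifies the Lindeberg condition by invoking the Arratia--Baxendale gamma-function tail bound $\p(X\ge t)\le 1/\Gamma(1+t)$. You instead work directly with the L\'evy--Khintchine exponent: a third-order Taylor expansion of $e^{i\theta x}$ integrated against the tilted L\'evy measure shows $\log\e e^{itY_\beta}=-t^2/2+O(|t|^3/\sigma_\beta)$, using $x^3\le x^2$ on $(0,1]$ to absorb the cubic moment into the variance; then L\'evy's continuity theorem finishes. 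Both arguments hinge on $\sigma_\beta^2\to\infty$, equivalently $C''(\beta)/C'(\beta)\to x^\ast:=\sup\mathrm{supp}(\mu)>0$, which the paper states as ``easy to see from the explicit integrals'' while you actually prove it via a clean size-bias concentration argument on $\nu_\beta(dx)\propto xe^{\beta x}\mu(dx)$. Your route is more self-contained (no appeal to an external tail inequality, no triangular-array bookkeeping) and arguably cleaner, at the cost of doing the complex-exponential Taylor estimate by hand rather than citing a packaged CLT. Both are valid; the tradeoff is essentially ``characteristic-function calculus'' versus ``cite Lindeberg--Feller plus a universal tail bound.''
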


\begin{proof}
 
 The mean and variance of $T_\beta$, call them $u(\beta)$ and
$\sigma^2(\beta)$, are given by $u(\beta)= C_\beta'(0) = \int x
e^{\beta x} \mu(dx)$ and $\sigma^2(\beta) = C_\beta''(0) = \int x^2
e^{\beta x} \mu(dx)$.  Clearly, $\infty > u(\beta) \ge \sigma^2(\beta)
\to \infty $ as $\beta \to \infty$, using the hypotheses that $\int x
\mu(dx) \in (0,\infty)$ and that the support of $\mu$ is contained in
$[0,1]$.  It is easy to see, from the explicit integrals for
$u(\beta)$ and $\sigma^2(\beta)$, that with $x_0 :=
\sup($support($\mu)) \in (0,1]$, $\sigma^2(\beta)/u(\beta) \to
x_0$ as $\beta \to \infty$.  To set up use of the Lindeberg-Feller
central limit theorem, 
fix a sequence $\beta(n) \to \infty$, and
take a triangular array where the $n$th row has
$m(n)$ i.i.d. mean zero entries, for $m(n) = \lceil u(\beta(n)) \rceil$,
and the sum of these $m(n)$ entries is $(T_\beta(n) - \e T_{\beta(n)})$.
In other words, each entry in row $n$ is distributed as $Y_n - \e Y_n$,
where
$$
   \e e^{Y_n} =  \exp(\left(\int (e^{\theta x}-1) \, \frac{1}{m(n)} e^{\beta
   x} \mu(dx) \right).
$$
Note that $m(n) \sim u(\beta(n))$ as $n \to \infty$.
The sum of the variances for the $n$th 
row is $ \sigma^2(\beta(n))$, 
with  $ \sigma^2(\beta(n)) \sim x_0 \, u(\beta) \sim x_0 \, m(n)$.
The hypothesis of the Lindeberg-Feller theorem, for any triangular array having $m(n)$ independent entries each distributed as $Y_n$, with total variance $\sigma^2(\beta(n))$ for the $n$th row, is that
for fixed $\varepsilon>0$,
\begin{equation}\label{wanted}
   m(n)  \, \e( (Y_n - \e Y_n)^2; | Y_n - \e Y_n| > \varepsilon \,
     \sigma(\beta(n))) = o\left(\sigma^2(\beta(n))\right);
\end{equation}
hence for our setup we need only show that
$$
\e( (Y_n - \e Y_n)^2; | Y_n - \e Y_n| > \varepsilon \,
     \sigma(\beta(n))) = o\left( 1 \right)
$$
as $n \to \infty$.     

For sufficiently large $n$, $\varepsilon \, \sigma(\beta(n)) >1$, and
since $Y_n \ge 0$ and $\e Y_n \le 1$, for these sufficiently large $n$ we have
$$ \e( (Y_n - \e Y_n)^2; | Y_n - \e Y_n| > \varepsilon \,
\sigma(\beta(n))) = \e( (Y_n - \e Y_n)^2; Y_n - \e Y_n > \varepsilon
\, \sigma(\beta(n))),
$$
which in turn is at most 
$$  \e( Y_n^2; Y_n > \varepsilon \, \sigma(\beta(n))).
$$

Finally, \cite[Theorem 1.2 and Section 6]{ArratiaBaxendale} assert
that \emph{any} random variable $X$ in
L\'evy[0,1], with $\e X \le 1$, satisfies, for all $t \ge 1,$ $ \p(X
\ge t) \le 1/\Gamma(1+t)$.  Our $Y_n$ is of this form, so the
upper bound on the upper tail probability
gives $\e (Y_n^2; Y_n \ge x) =  x^2 \, \p(Y_n \ge x) +\int_x^\infty 2t \, \p(Y_n \ge t) \ dt
\le x^2 / \Gamma(1+x) + \int_{t \ge x} 2t/\Gamma(1+t) \ dt
= o(1)$, using   $x = \varepsilon \, \sigma(\beta(n)) \to \infty$.
 
\end{proof}

\noindent {\bf Example}  Take $\mu$ to be the measure on $(0,\infty)$
with $\mu(dx) = e^{-x}/x \ dx$;  
this is known as the \emph{Moran subordinator},  see for example
\cite[Section 9.4]{Kingman}.  With $T$ and $T_\beta$ as given by
by \eqref{C from mu} and \eqref{cramer}, $T$ has the standard
exponential distribution, and for $\beta<1$, $T_\beta$ has the
exponential distribution with mean $1/(1-\beta)$.  For any $u \in
(0,\infty)$, one can solve $\e T_\beta=u$, and as $u \to \infty$, we
have $\beta(u) \to 1$ but there is no rescaling and centering of $T_\beta$
which converges to the normal distribution.

\section{Preliminaries}\label{s2}

In this section we fix notation and prove some preliminary 
lemmas about the distribution of the sum of arrivals, $T$, in a
Poisson process $\pp(\nu)$ on the non-negative reals, with intensity
measure $\nu$ satisfying
\[
d\nu=g(x)dx
\]
for some density function $g(x)$ satisfying certain subsets of the hypotheses of Theorem 
\ref{new theorem 1}. 
 For  omitted
proofs or definitions pertaining to Poisson processes, we refer the
reader to \cite{Kingman}.
 
For any intensity measure $\nu$ with support in $\left[0,1\right]$,
define
\begin{equation}\label{eq101b}
C(z):=\int_0^1(e^{zx}-1)d\nu (x),
\end{equation}
and let $T$ denote the sum of arrivals in $\pp(\nu)$.  When
$d\nu=e^{\beta x}g(x)\,dx$ for some fixed $g(x)$, we may write $T_\beta$
and $C_\beta$, but in Lemma~\ref{p1} 
below, we
suppress dependence on $\beta$, to avoid clutter.  We  sometimes
specifically single out the case $\beta=0$, i.e., the untilted
measure, with the subscript ``$0$''.  Thus
\begin{equation}\label{eq102}
C_0(z)=\int_0^1\left(e^{zx}-1\right) g(x) dx
\end{equation}
and
\[
T_0 \textrm{ is the sum of arrivals in } \pp(g(x)dx).
\]  
Trivially, for $d \nu=e^{\beta x}g(x) dx$, we have 
\begin{equation}\label{eq103}
C(z)=C_0(z+\beta)-C_0(z).
\end{equation}
\begin{lem}\label{p1}
Let $\nu$ and $T$ be as just discussed.  Then  

a) $Ee^{zT}=e^{C(z)}$

b) $ET=\int_0^1 x d\nu(x)$, and
 
c) var $(T)=\int_0^1 x^2 d \nu (x)$.
\end{lem}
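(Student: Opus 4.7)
The plan is to derive all three identities from the standard Laplace-functional and Campbell calculus for Poisson point processes, applied to the linear function $h(x) = x$. Writing $\Pi$ for the random atom set of $\pp(\nu)$, we have $T = \sum_{x \in \Pi} x$; then (a) is the Laplace functional of $\Pi$ evaluated at $h$, while (b) and (c) are the first- and second-moment (Campbell) formulas for this linear functional of $\Pi$; see, e.g., \cite{Kingman}.

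For a self-contained derivation of (a), first I would truncate to $[\delta, 1]$, where $\nu([\delta,1]) < \infty$ so that $\Pi \cap [\delta, 1]$ is a.s.\ finite. Partitioning $[\delta, 1]$ into $n$ half-open subintervals $I_{n,j}$ with masses $\nu_{n,j} := \nu(I_{n,j})$, the counts $N_{n,j} := \#(\Pi \cap I_{n,j})$ are independent Poisson$(\nu_{n,j})$, so that the Riemann-sum approximant $T_n^\delta := \sum_j (\sup I_{n,j})\, N_{n,j}$ has
\[
   \e e^{z T_n^\delta} \;=\; \prod_j \exp\bigl(\nu_{n,j}(e^{z \sup I_{n,j}} - 1)\bigr).
\]
Letting $n \to \infty$, the exponent is a Riemann sum converging to $\int_\delta^1 (e^{zx}-1)\,d\nu(x)$ and $T_n^\delta \to T^\delta := \sum_{x \in \Pi \cap [\delta,1]} x$, establishing the MGF identity for $T^\delta$. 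Sending $\delta \downarrow 0$: a direct conditioning on the Poisson count $\#(\Pi\cap[\delta,1])$ gives $\e T^\delta = \int_\delta^1 x\, d\nu$, and monotone convergence combined with the hypothesis $\int x \, d\nu < \infty$ yields $T^\delta \uparrow T < \infty$ a.s.\ and $\e T = \int_0^1 x\, d\nu$. Dominated convergence on the exponent, via the elementary estimate $|e^{zx} - 1| \le |z|\, x\, e^{|z|}$ on $[0,1]$, then completes (a); this passage simultaneously settles (b).

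For (c), the bound $|e^{zx}-1| \le |z|xe^{|z|}$ and the analogous bounds for $x$-derivatives of the integrand, together with $\int x \, d\nu < \infty$ and the $[0,1]$-support of $\nu$, allow differentiation of $C(z)$ twice under the integral; from (a), $C$ is the cumulant generating function of $T$, so $\mathrm{Var}(T) = C''(0) = \int_0^1 x^2 \, d\nu(x)$.

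The only genuine obstacle is bookkeeping in the infinite-intensity case $\nu((0,1]) = \infty$, where one must verify $T < \infty$ a.s.\ and justify the truncation-and-limit passage above; both issues dissolve once one notes that the monotone-convergence step relies solely on the standing hypothesis $\int x \, d\nu(x) < \infty$.
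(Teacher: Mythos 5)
Your proposal is correct and identifies the result as Campbell's theorem for Poisson processes (citing Kingman), which is exactly what the paper does; the paper's proof is a one-line citation, whereas you additionally supply the standard truncation-and-discretization derivation that the reference would contain. Your filled-in details — finiteness of $\nu([\delta,1])$ from $\int x\,d\nu < \infty$, the Riemann-sum limit for the Laplace functional, the monotone/dominated convergence passage as $\delta \downarrow 0$, and differentiation under the integral for the variance — are all sound and consistent with the paper's appeal to Campbell's theorem.
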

\begin{proof}
These are parts of Campbell's theorem, valid for very general
intensity measures $\nu$;  see \cite{Kingman}.
\end{proof}
\begin{lem}\label{p2}
Let $d \nu=e^{\beta x} g(x) dx $, where $g(x)$  
is nonnegative and bounded and, for some $\varepsilon > 0$, satisfies \eqref{eq101}.  Then

a)	$Ee^{zT_\beta}=e^{{C_0}(z+\beta)-C_0(\beta)}$.

b)	There are constants $0<K_1<K_2$ (depending on $g$), such that
$C_0(\beta)$, together with any finite collection of integrals
$\int_0^1 x^k e^{\beta x} g(x) dx$ for $k=0, 1, 2,  \ldots $ all lie
between $K_1 \frac{e^\beta}{\beta}$ and $K_2\frac{e^{\beta}}{\beta}$
for $\beta$ sufficiently large. (The constants also depend on the
particular finite collection.)
If only $xg(x)$ is bounded, instead of $g(x)$, this still holds for $k = 1,2,\dots$.

c)  There are (different) constants $0<K_1<K_2$ such that for $\beta$
sufficiently large, the ratio of any pair of integrals from part b)
lies between $K_1$ and $K_2$. 

\end{lem}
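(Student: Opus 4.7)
For part (a), the plan is just to combine two facts already in hand in the excerpt. With $d\nu = e^{\beta x}g(x)\,dx$ the definition \eqref{eq101b} gives $C(z) = C_0(z+\beta)-C_0(\beta)$ as noted in \eqref{eq103}, and Lemma \ref{p1}(a) applied to this $\nu$ yields $\e e^{zT_\beta} = e^{C(z)}$. Substituting completes the step; no real work is required here.

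The substance lives in part (b). I will bracket, for each $k \ge 0$, the integral $I_k(\beta) := \int_0^1 x^k e^{\beta x} g(x)\,dx$ between constant multiples of $e^\beta/\beta$. For the upper bound, if $g \le M$ on $[0,1]$ then $I_k(\beta) \le M \int_0^1 e^{\beta x}\,dx = M(e^\beta-1)/\beta$; if instead only $xg(x)$ is bounded, say by $M$, then for $k \ge 1$ we have $x^k g(x) = x^{k-1}\bigl(xg(x)\bigr) \le M$ on $(0,1]$, and the same estimate goes through (which is exactly why the claim restricts to $k \ge 1$ in that regime). For the lower bound, I will exploit hypothesis \eqref{eq101}: since $g(x) \ge \epsilon$ on $[1-\epsilon,1]$,
\begin{equation*}
I_k(\beta) \;\ge\; \epsilon(1-\epsilon)^k \int_{1-\epsilon}^1 e^{\beta x}\,dx
\;=\; \epsilon(1-\epsilon)^k \,\frac{e^\beta\bigl(1-e^{-\beta\epsilon}\bigr)}{\beta},
\end{equation*}
which is at least $\tfrac{1}{2}\epsilon(1-\epsilon)^k \, e^\beta/\beta$ once $\beta$ is large enough that $1 - e^{-\beta\epsilon} \ge 1/2$. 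For $C_0(\beta)$ itself, decompose $C_0(\beta) = I_0(\beta) - \int_0^1 g(x)\,dx$; the subtracted term is a finite constant (when $g$ is bounded) and is swallowed by the leading term, giving the same two-sided bound. The bounds for any fixed finite family of $k$'s then amalgamate by taking the smallest lower constant and the largest upper constant over the family.

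Part (c) is a one-line consequence of (b): if every integral in a finite collection lies in $[K_1 e^\beta/\beta,\, K_2 e^\beta/\beta]$, then every ratio lies in $[K_1/K_2,\, K_2/K_1]$, and relabeling gives new constants $0 < K_1 < K_2$ with the stated property. I do not expect any serious obstacle in this lemma — it is largely bookkeeping — but the one place I will be careful is the dichotomy in the statement of (b): when $g$ is only assumed to satisfy $xg(x)$ bounded (so that $g$ may blow up at $0^+$), the $k=0$ case and the assertion about $C_0(\beta)$ are genuinely excluded, and the proof must reflect this by separating the two hypotheses rather than trying to treat them uniformly.
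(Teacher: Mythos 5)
Your proof is correct and follows essentially the same route as the paper: both start from the bracketing
\[
\epsilon \int_{1-\epsilon}^{1} h(x) e^{\beta x}\,dx \;\le\; \int_0^1 h(x)\, e^{\beta x}\, g(x)\,dx \;\le\; K\int_0^1 h(x)\, e^{\beta x}\, dx
\]
(with $h(x)=x^k$), handle the $xg$-bounded case by peeling off a factor $x$, and treat (a) and (c) as immediate consequences. The one small divergence is in the final estimate: the paper says ``the rest is integration by parts,'' i.e.\ it evaluates $\int x^k e^{\beta x}\,dx$ by iterated IBP, while you instead bound $x^k$ above by $1$ on $[0,1]$ and below by $(1-\epsilon)^k$ on $[1-\epsilon,1]$ and compute the remaining $\int e^{\beta x}\,dx$ in closed form. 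Your version is a shade more elementary and equally sufficient for the two-sided $\Theta(e^\beta/\beta)$ bound; IBP would additionally give the sharper asymptotic $\int_0^1 x^k e^{\beta x}\,dx \sim e^\beta/\beta$ with explicit lower-order corrections, but nothing in Lemma 2 requires that. You also correctly observed the dichotomy the statement builds in: when only $xg$ is bounded, $I_0$ and $C_0(\beta)$ are excluded, and your absorption of the finite constant $\int_0^1 g$ into the leading term for $C_0(\beta)$ in the bounded-$g$ case is exactly the right bookkeeping.
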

\begin{proof}
a) is immediate from \eqref{eq103} and c) is immediate from b).  As
for b), when $g(x)$ is bounded our hypotheses imply that for some $K, \epsilon>0$, for any
non-negative function $h(x)$ we have
\[
\epsilon \int_{1-\epsilon}^{1} h(x) e^{\beta x}dx \leq \int_0^1 h(x)
e^{\beta x} g(x) dx \leq K\int_0^1 h(x) e^{\beta x} dx.
\]
The rest is integration by parts.

If only $xg(x)$ is bounded then to get the rightmost inequality, for any non-negative $h(x)$
for which $h(x)/x$ is bounded on $[0,1]$ (such as $h(x) = x$) rewrite the
middle integrand as $h(x)e^{\beta x} g(x) = (h(x)/x)e^{\beta x} (xg(x))$ and proceed from there.
\end{proof}

\begin{lem}\label{p3}
Let $d \nu =e^{\beta x} g(x) dx$, where $g$ is nonnegative and bounded and, 
for some $\varepsilon > 0$, satisfies \eqref{eq101}.  Then
 
a) $C_0 (\beta)$, along with all the integrals $\int_0^1 x^k e^{\beta
  x} g(x) dx$ for $k= 0, 1, 2, \ldots$, grows to $\infty$ as $u(\beta)
\rightarrow \infty$.  

b) The following statements only require $xg(x)$ bounded, not $g(x)$:\\ 
For $\beta >0$, the function $u(\beta):=ET_\beta$ satisfies
$\frac{du}{d \beta}=\textrm{ var } (T_\beta)>0$ and, so, is monotone
increasing and hence invertible.  The inverse function $\beta (u)$
satisfies $\frac{e^\beta}{u}\rightarrow \infty$ as $u \rightarrow
\infty$ (or $\beta \rightarrow \infty$), but for any $\epsilon>0$,
$\frac{e^\beta}{u^{1+\epsilon}}\rightarrow 0$.

c) $P(T_\beta=0)=e^{-\int_0^1 e^{\beta x} g(x) dx}
=O\left(e^{-Ku}\right)$ for some $K>0$, where $u=ET_\beta$.
\end{lem}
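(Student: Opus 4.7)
The plan is to read off every claim as an almost immediate consequence of Lemma~\ref{p2}, together with one standard fact about Poisson processes. I will handle the three parts in the order (b), (a), (c), since (b) establishes the monotonicity of $u(\beta)$ (and with it the equivalence $u \to \infty \Leftrightarrow \beta \to \infty$) that the other two parts quietly depend on.

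For (b), differentiation under the integral sign applied to $u(\beta) = \int_0^1 x e^{\beta x} g(x)\, dx$ gives $u'(\beta) = \int_0^1 x^2 e^{\beta x} g(x)\, dx$, which by Lemma~\ref{p1}(c) is $\textrm{var}(T_\beta)$; this is strictly positive thanks to the lower bound $g(x) \ge \epsilon$ on $[1-\epsilon,1]$, so $u$ is strictly monotone and hence invertible. For the two asymptotics I would invoke Lemma~\ref{p2}(b) with $k = 1$ (which needs only $xg(x)$ bounded) to sandwich $u(\beta)$ between $K_1 e^\beta/\beta$ and $K_2 e^\beta/\beta$ for large $\beta$. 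The upper bound yields $e^\beta/u \ge \beta/K_2 \to \infty$; the lower bound raised to the $(1+\epsilon)$ power yields $e^\beta/u^{1+\epsilon} \le \beta^{1+\epsilon}/(K_1^{1+\epsilon} e^{\epsilon \beta}) \to 0$.

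Part (a) then follows formally: from the sandwich in (b), $u \to \infty$ forces $\beta \to \infty$ (and conversely, since $u \to \infty$ as $\beta \to \infty$ is immediate from $u(\beta) \ge \epsilon \int_{1-\epsilon}^1 x e^{\beta x} dx$); and once $\beta \to \infty$, Lemma~\ref{p2}(b) traps each $\int_0^1 x^k e^{\beta x} g(x)\, dx$ and $C_0(\beta)$ between positive multiples of $e^\beta/\beta$, which itself tends to $\infty$.

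For (c), the identity $\p(T_\beta = 0) = \exp\bigl(-\int_0^1 e^{\beta x} g(x)\, dx\bigr)$ is immediate from the Poisson process structure: the total number of arrivals in $\pp(e^{\beta x} g(x)\, dx)$ is Poisson with mean equal to the total intensity, and $T_\beta = 0$ precisely when there are no arrivals. For the $O(e^{-Ku})$ bound, Lemma~\ref{p2}(c) asserts that the ratio of $\int_0^1 e^{\beta x} g(x)\, dx$ (the $k=0$ integral) to $u = \int_0^1 x e^{\beta x} g(x)\, dx$ (the $k=1$ integral) is bounded below by a positive constant $K$ for large $\beta$, so the exponent is at least $Ku$ and the bound follows. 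There is really no main obstacle here; the lemma is a repackaging of Lemma~\ref{p2} and one elementary Poisson fact, and the only point requiring some care is bookkeeping which pieces of the hypothesis ($g$ bounded vs.\ $xg(x)$ bounded) are actually needed at each step, so that the parenthetical remark in (b) is honestly earned.
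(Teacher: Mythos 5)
Your proof is correct and follows essentially the same route as the paper: both treat Lemma~\ref{p3} as a direct corollary of Lemma~\ref{p2}, using the comparisons with $e^\beta/\beta$ to get the growth estimates and the ratio bound to get part (c). The one small stylistic difference is that you derive $e^\beta/u \to \infty$ and $e^\beta/u^{1+\epsilon} \to 0$ by direct substitution into the two-sided bound of Lemma~\ref{p2}(b), whereas the paper argues both by contradiction; your version is arguably cleaner but is the same idea.
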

\begin{proof}
This is all just a corollary of Lemma~\ref{p2}.  a) follows at
once from Lemma~\ref{p2}(b).  As for b), the assertions about
$u(\beta)$ are immediate.

To confirm the growth properties of $e^{\beta}$, if we had
$e^{\beta}/u <K$ for arbitrarily large $u$, for some $K$, then also we
would have
\[
u=ET_\beta<K_2 \frac{e^{\beta}}{\beta} \Rightarrow u
<\frac{KK_2u}{\beta}\Rightarrow 1<\frac{KK_2}{\beta}
\]
for arbitrarily large $\beta$, which is impossible.

Also, suppose that for some $\epsilon>0$ we have
$\frac{e^\beta}{u^{1+\epsilon}}>K>0$, for some $K$, for arbitrarily
large $u$.  Pick $0<\zeta <1$ such that $\left(1+\epsilon\right)
\left(1-\zeta\right)>1$.  Since asymptotically
$\frac{e^\beta}{\beta}>e^{(1-\zeta)\beta}$ and also $u
>K_1\frac{e^\beta}{\beta}$, we would have $u>K_1
e^{(1-\zeta)\beta}>u^{(1-\zeta)(1+\epsilon)}K_1K^{1-\zeta}$ for
arbitrarily large $u$, which is also impossible.

 Finally, the formula for $P(T_\beta=0)$ is standard Poisson theory,
 and the bound follows from the definition of $u$ with
 Lemma~\ref{p1}(b) and~\ref{p2}(b).
\end{proof}
 
To conclude, we show that when the density $g$ is piecewise $C_k$
for $k \ge 1$, 
the random variable $T_\beta$ does, in fact have a density
function $f_\beta$, possessing a certain degree of regularity.  
Strictly speaking we should
refer to $f_\beta$ as a \emph{defective} density, since the
distribution of $T_\beta$ has a positive atom at 0.  Namely, under our
hypotheses, for $\beta \geq 0$ we have
\[
\mu:=\nu \left\{ \left[0,1\right]\right\} =\int_0^1 e^{\beta x} g(x)
dx < \infty.
\]
So in our Poisson process the probability of no arrivals at all,
$P(T_\beta=0)$, is $e^{-\mu}>0$.

However, conditional on there being any arrivals at all, $T_\beta$
does possess a conditional density.  In fact, we have the
following:
\begin{lem}\label{p4}
Let the density function $g(x)$ be piecewise $C^k$ on $[0,1]$, for $k \ge 1$.

a) Conditional on $T_\beta>0$, $T_\beta$ possesses a (conditional)
density $h$.

b) $h$ is piecewise continuous, and for $t>1$, $h(t)$ is continuous.

c) For $t>2$, $h(t)$ is $C^1$.
\end{lem}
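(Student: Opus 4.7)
The plan is to decompose $T_\beta$ via its Poisson arrival count $N$. Piecewise $C^k$ with $k \ge 1$ ensures that $\tilde g(x) := e^{\beta x}g(x)/\mu$ is a bona fide density on $[0,1]$, where $\mu := \int_0^1 e^{\beta x}g(x)\,dx < \infty$. Conditional on $N = n \ge 1$, the arrivals are i.i.d.\ with density $\tilde g$, so
\[
h(t) \;=\; \frac{1}{1 - e^{-\mu}}\sum_{n=1}^{\infty}\frac{e^{-\mu}\mu^n}{n!}\,h_n(t), \qquad h_n := \tilde g^{*n},
\]
which establishes (a) once the series is seen to represent a legitimate density. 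Since $h_n$ is supported on $[0,n]$, at any fixed $t > 1$ the $n = 1$ summand vanishes identically in a neighborhood of $t$, and at any fixed $t > 2$ both the $n = 1$ and $n = 2$ summands vanish identically.

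The main analytic input is the regularity gained from iterated convolution. Since $\tilde g$ is piecewise $C^k$ with $k \ge 1$, it is bounded and of bounded variation on $\mathbb{R}$, with finitely many jumps. Integration by parts on each smooth piece shows that its Fourier transform satisfies $|\hat{\tilde g}(\xi)| = O(|\xi|^{-1})$ as $|\xi|\to\infty$, with the boundary jumps supplying the leading term. Therefore $|\widehat{h_n}(\xi)| = |\hat{\tilde g}(\xi)|^n = O(|\xi|^{-n})$, and Fourier inversion gives $h_n \in C^{n-2}$ for every $n \ge 2$; in particular $h_2$ is continuous and $h_3 \in C^1$. (Equivalently, one may argue by hand: $h_2 = \tilde g * \tilde g$ is continuous via $L^1$-continuity of translation, and convolution of this Lipschitz continuous function with the BV function $\tilde g$ upgrades $h_3$ to $C^1$.)

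For the series, Young's inequality gives $\|h_n\|_\infty \le \|\tilde g\|_\infty$ for every $n$, so $\sum \frac{\mu^n}{n!}h_n$ converges uniformly on $\mathbb{R}$. For the derivative on $(2,\infty)$, write $h_n = h_3 * h_{n-3}$ for $n \ge 4$, whence $h_n' = h_3' * h_{n-3}$ and $\|h_n'\|_\infty \le \|h_3'\|_\infty \cdot \|h_{n-3}\|_1 = \|h_3'\|_\infty$. Consequently $\sum \frac{\mu^n}{n!}h_n'$ also converges uniformly on $\mathbb{R}$, legitimizing termwise differentiation.

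Assembling the pieces: the full series is piecewise continuous because only the $n = 1$ summand $\tilde g\cdot\mathbf{1}_{[0,1]}$ contributes discontinuities, giving the first half of (b); on $(1,\infty)$ the $n = 1$ term vanishes, and a uniformly convergent series of continuous functions has a continuous sum, giving the second half of (b); on $(2,\infty)$ both the $n = 1$ and $n = 2$ terms vanish, and the uniformly convergent series of $C^1$ functions has a $C^1$ sum with the expected derivative, giving (c). The principal obstacle is establishing $h_3 \in C^1$ cleanly from the piecewise regularity of $\tilde g$: one must track not only the prominent jump at $x = 1$ but also the finitely many interior discontinuities of $g$ and $g'$ through two convolutions, which is why the argument is most transparent on the Fourier side.
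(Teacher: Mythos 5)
Your proof is correct, and it reaches the same structural skeleton as the paper's argument (decompose by the Poisson arrival count $N$, observe that $h_n$ is supported on $[0,n]$ so the $n=1$ and $n=2$ summands vanish on $(1,\infty)$ and $(2,\infty)$ respectively, and use uniform boundedness of the convolution powers to sum the series). Where you genuinely diverge is in how regularity of the iterated convolutions is established. The paper proceeds by direct calculus, citing a textbook exercise (Kammler, Ex.\ 2.37) for the claim that convolution with a piecewise-$C^1$ kernel raises a continuous function to $C^1$, and then proving the uniform convergence of the termwise-differentiated series by hand via the mean value theorem, using the slightly delicate identity $\frac{d}{dt}\bigl(q*q^{*k}\bigr) = q'*q^{*k}$, which really requires interpreting $q'$ as the BV measure $dq$ since $q$ has jumps. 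You instead deduce $\lvert\hat{\tilde g}(\xi)\rvert = O(\lvert\xi\rvert^{-1})$ from piecewise $C^1$ plus boundary jumps, conclude $\lvert\widehat{h_n}\rvert = O(\lvert\xi\rvert^{-n})$, and get $h_n\in C^{n-2}$ by Fourier inversion; then you bound $\lVert h_n'\rVert_\infty\le\lVert h_3'\rVert_\infty$ for $n\ge 4$ by writing $h_n=h_3*h_{n-3}$ and convolving with a probability density. The Fourier route buys cleaner bookkeeping --- it sidesteps the need to track the finitely many interior jumps of $g$ and $g'$ through two convolutions, and the identity $h_n'=h_3'*h_{n-3}$ is unambiguous because $h_3'$ is a genuine bounded continuous function --- at the cost of invoking more machinery than the paper's essentially elementary treatment.
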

\begin{proof}
Let $q(x)$ be the probability density on $[0,1]$ proportional 
to $e^{\beta x}g(x)$.

a)	Conditional on exactly $k$ arrivals, for $k>0$, the density 
of $T_\beta$ is the $k$-fold convolution  $q* \cdots *q$.  
(See \cite{Kingman} for this standard result.)  Note that these 
convolution products are uniformly bounded by $\sup(q)$, 
since for any pair $q_1$ and $q_2$ of probability densities on
$\mathbb{R}$ we have
\begin{align*}
\sup(q_1 * q_2) &=\sup \left\{ \int_{-\infty}^{\infty} q_1 (t-s) q_2
(s) ds \right\}\\ &\leq \sup (q_1) \cdot \int_{-\infty}^{\infty} q_2
(s) ds =\sup (q_1).
\end{align*}

Let $p_k$ be the conditional probability of $k$ arrivals, given that
there is at least one.  Because the $q^{*k}$ are uniformly bounded,
the sum $ \sum_{k=1}^\infty p_k q^{*k} (x) $
exists and is measurable;   and since the conditional probability that
$a<T_\beta \leq b$, given at least one arrival, is
\[
\sum_{k\geq 1} p_k \int_a^b q^{*k}(x)dx,
\]
it follows that $h(x)$ is a conditional density for $T_\beta$.

b) It is an exercise to check inductively that for $k \geq 2$, the
$k$-fold convolution of $k$ copies of $q$ is continuous.  So by
uniform convergence, the function 
$$
h(x) = \sum_{k \geq 2}^\infty p_k q^{*k}
$$
is continuous.  Since supp$(q) \subset \left[ 0,1 \right] $ it follows
that $h=q+\sum_{k \geq 2}^{\infty} p_k q^{*k}$ is continuous for
$t>1$, and piecewise continuous for $t \leq 1$.

c) It is another exercise\footnote{see, e.g. exercise 2.37 on page 128 of \cite{Kammler}} 
to show that since $q$ is piecewise $C^1$ and $q^{*k}$
is continuous for $k\geq 2$, $q^{*(k+1)}=q*(q^{*k})$ is $C^1$.  We claim, further, that
\[
h_3=\sum_{k=3}^{\infty} p_k q^{*k}
\]
is $C^1$.  If so, then since supp$(p_1q+p_2q*q)\subset
\left[0,2\right]$, it follows that $h$ is $C^1$ for $t>2$.

We show that $h_3$ is $C^1$.  Since
$\frac{d}{dt}\left(q*q^{*k}\right)=q^\prime * q^{*k}$, it follows by
the argument in part a) that since $q^\prime$ is bounded, the
derivatives of $q^{*(k+1)}$ are uniformly bounded in $k$.  Let $d(t)$
be the series formed from term by term derivatives.  Then
\begin{align*}
&\left|\left(h_3(t+\delta)-h_3(t)\right)/\delta-d(t)\right|\\ &\leq
  \left| \sum_{k=3}^{k_0} p_{\beta} \left( \left(
  q^{*k}\right)^{\prime}(t_k)-\left(
  q^{*k}\right)^{\prime}(t)\right)\right| +\left| \sum_{k\geq
    k_0}\left(\left(q^{*k}\right)^{\prime}(t_k)-(q^{*k})^{\prime}(t)\right)\right|
\end{align*}
 where $t \leq t_k \leq t+\delta$, by the mean value theorem.  Because
 of all the boundedness and the convergence of $\sum p_k$, the tail
 can be made arbitrarily small, independent of $\delta$, with
 sufficiently large $K_0$, and then the initial segment can also be
 made arbitrarily small with sufficiently small $\delta$.  So $h_3$ is
 $C^1$ with $h_3^\prime (t)=d(t)$.
\end{proof}
Writing $p_0=P(T_\beta=0)$, the probability measure underlying
the distribution of $T_\beta$ can be written as 
\[
p_0 \delta_0(t)+(1-p_0) h(t) \ dt.
\]
The term $(1-p_0)h$, with total mass $1-p_0<1$, is referred to as a
defective density.  
\newline

\section{Statement and proof of Proposition \ref{new main prop}}\label{3}

Let $g(x)$ be a bounded density function  satisfying the 
hypotheses of Theorem \ref{new theorem 1}.  Let $T_\beta$ be as given by \eqref{cramer}. 
Let
\begin{equation}\label{eq201}
Y  \equiv Y_\beta :=\frac{T_\beta-E_\beta}{\sigmabeta}
\end{equation}
be the standardized version of $T_\beta$.  Since $T_\beta$ possesses
an atom at 0, $Y$ possesses an atom at $\frac{-E_\beta}{\sigmabeta}$.
By Lemma \ref{p4} we know that $Y$ possesses a defective density function $f_Y(y)$.  We will prove
the following result:

\begin{prop}\label{new main prop}   
If $\beta \rightarrow \infty$, then  $E_\beta \to \infty$ and
\begin{equation}\label{eq202}
f_Y(y)=\frac{1}{\sqrt{2 \pi}}e^{-y^2/2} +O\left( \frac{1}{\sqrt{E_\beta}}\right)
\end{equation}
uniformly in $y$.  Further, for $y=0$ we have the stronger statement 
\begin{equation}\label{eq203}
f_Y(0)=\frac{1}{\sqrt{2\pi}}+O\left(\frac{1}{E_\beta}\right).
\end{equation}
\end{prop}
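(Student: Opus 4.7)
Writing $\phi_Y(t) := \e e^{itY_\beta} = \exp\bigl(C_\beta(it/\sigmabeta) - itE_\beta/\sigmabeta\bigr)$, and noting that $T_\beta$'s atom at $0$ contributes $p_0 := \p(T_\beta=0) = O(e^{-cE_\beta})$ by Lemma \ref{p3}(c), the Fourier transform of the defective density is $\widehat{f_Y}(t) = \phi_Y(t) - p_0\,e^{-itE_\beta/\sigmabeta}$. Assuming $\widehat{f_Y} \in L^1$ (established by integration-by-parts bounds using the piecewise $C^2$ regularity of $g$, yielding $|\widehat{f_Y}(t)| = O(1/t^2)$ for large $|t|$), Fourier inversion gives
\begin{equation*}
f_Y(y) - \frac{1}{\sqrt{2\pi}}\,e^{-y^2/2} \,=\, \frac{1}{2\pi}\int_{-\infty}^{\infty} e^{-ity}\bigl(\widehat{f_Y}(t) - e^{-t^2/2}\bigr)\,dt,
\end{equation*}
uniformly in $y$ because $|e^{-ity}| = 1$. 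Lemma \ref{p2} gives the scaling $\mu_k := \int_0^1 x^k e^{\beta x}g(x)\,dx \asymp e^\beta/\beta \asymp E_\beta \asymp \sigmabeta^2$, so $\mu_k/\sigmabeta^k \asymp E_\beta^{1-k/2}$.

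\textbf{The four zones.} Fix small $\delta, A > 0$. In \emph{Zone I}, $|t| \le \delta E_\beta^{1/6}$: Taylor-expand the cumulant generating function as $C_\beta(it/\sigmabeta) - itE_\beta/\sigmabeta = -t^2/2 - ic_3 t^3 + r(t)$, with $c_3 = \mu_3/(6\sigmabeta^3) = O(E_\beta^{-1/2})$ and $|r(t)| = O(t^4/E_\beta)$; a standard cumulant perturbation argument yields $|\phi_Y(t) - e^{-t^2/2}| \le C(|c_3 t^3| + t^4/E_\beta)\,e^{-t^2/4}$, integrating to $O(1/\sqrt{E_\beta})$. In \emph{Zone II}, $\delta E_\beta^{1/6} < |t| \le A\sigmabeta$: the uniform inequality $1-\cos u \ge cu^2$ on $|u| \le A$ gives $-\re C_\beta(it/\sigmabeta) \ge ct^2$, so $|\phi_Y(t)| \le e^{-ct^2}$ with total contribution $O(e^{-c\delta^2 E_\beta^{1/3}})$. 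In \emph{Zone III}, $A\sigmabeta < |t| \le \sigmabeta\beta$: setting $s = t/\sigmabeta$ (bounded above by $\beta$), hypothesis \eqref{eq101} restricts $-\re C_\beta(is)$ to $x \in [1-\epsilon,1]$ where $g \ge \epsilon$, and the substitution $x = 1 - v/\beta$ converts the inner integral to a Laplace expression with explicit value
\begin{equation*}
-\re C_\beta(is) \,\gtrsim\, E_\beta\cdot\min\bigl(1,\,(s/\beta)^2\bigr) \,=\, \min\bigl(E_\beta,\,t^2/\beta^2\bigr);
\end{equation*}
hence $|\phi_Y(t)| \le \exp(-ct^2/\beta^2)$, super-polynomially small on this zone. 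In \emph{Zone IV}, $|t| > \sigmabeta\beta$: the piecewise $C^2$ bound $|\widehat{f_Y}(t)| = O(1/t^2)$ integrates to $O(1/(\sigmabeta\beta)) = o(1/\sqrt{E_\beta})$, and $e^{-t^2/2}$ is even smaller here.

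\textbf{Sharper bound at $y=0$.} At $y = 0$ the oscillatory factor $e^{-ity}$ equals $1$. The leading Zone-I error $-ic_3 t^3 e^{-t^2/2}$ is an \emph{odd} function of $t$, so its integral over $\mathbb{R}$ vanishes. The surviving Zone-I terms---the quartic Taylor remainder $O(t^4/E_\beta)\,e^{-t^2/2}$ and the squared-cubic $c_3^2 t^6\,e^{-t^2/2}$---each integrate to $O(1/E_\beta)$, while Zones II--IV remain $o(1/E_\beta)$. This yields \eqref{eq203}.

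\textbf{The main obstacle.} Zone III is the crux. The worry is that the oscillations of $\cos(sx)$ might align adversely with the sharply-peaked weight $e^{\beta x}$, preventing a nontrivial lower bound on $-\re C_\beta(is)$. The change of variables $x = 1 - v/\beta$ linearizes the problem to a Laplace-transform computation whose real part can be evaluated in closed form; combined with \eqref{eq101}, this produces the uniform lower bound of the right order. Zone IV then requires transferring piecewise $C^2$ regularity of $g$ (via integration by parts at each smoothness interval, together with Lemma \ref{p4}'s convolution-induced smoothness of the tail of $f_Y$) into $O(1/t^2)$ decay of $\widehat{f_Y}$.
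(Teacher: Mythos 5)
You follow the same overall strategy as the paper: Fourier inversion, four concentric zones in the frequency variable, a cumulant expansion in the central zone with odd-term cancellation at $y=0$, and the lower bound on $-\re\, C_\beta(i\tau)$ coming from hypothesis \eqref{eq101} in the intermediate zones. Your Zones I, II, III correspond closely to the paper's Zones 0, 1, 2, and those estimates are essentially correct. The problem is the outer zone, and it is not cosmetic.

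The premise that $\widehat{f_Y}\in L^1$, ``established by integration-by-parts bounds $\ldots$ yielding $|\widehat{f_Y}(t)|=O(1/t^2)$,'' is false. Write $I(\beta,t)=\int_0^1 e^{(\beta-i\alpha t)x}g(x)\,dx$ with $\alpha=1/\sigmabeta$. A single integration by parts gives
$I(\beta,t)=\sum_j C_j\,e^{a_j(\beta-i\alpha t)}/(\beta-i\alpha t)+O\bigl(\sigmabeta^2 e^{2\beta}/t^2\bigr)$,
where the $a_j$ are the jump points of $g$, and hypothesis \eqref{eq101} forces a genuine jump at $a_L=1$ with coefficient $C_L\ne 0$ (this is the paper's Lemma~\ref{L3}). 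Thus $I(\beta,t)\asymp\sigmabeta/|t|$ for large $|t|$; the piecewise-$C^2$ hypothesis only upgrades the \emph{remainder} to $O(1/t^2)$, not the boundary terms. Since $\widehat{f_Y}(t)=P(T_\beta=0)\,e^{i\alpha tu}\bigl(e^{I(\beta,t)}-1\bigr)$, it inherits the $1/|t|$ decay. Equivalently, the conditional density of $T_\beta$ has a jump discontinuity forced by the boundary of $\mathrm{supp}(g)$, so its transform cannot decay faster than $1/|t|$; the paper notes explicitly that $\widehat{f_Y}$ is not $L^1$.

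Consequently the inversion formula must be read as a conditionally convergent improper limit $\lim_{A\to\infty}\frac{1}{2\pi}\int_{-A}^A$, and your Zone-IV estimate by absolute integration fails, since $\int_{|t|>\sigmabeta\beta}\sigmabeta/|t|\,dt$ diverges. What rescues the outer zone is the oscillation. After substituting $\tau=\alpha t$, each boundary term carries a factor $e^{ic\tau}/(\beta-i\tau)$ with net frequency $c=y\sigmabeta+u-a_j$, which is of order $u$; one further integration by parts in $\tau$ gains the missing factor of $1/c$, making the improper integral finite and at most polynomially large, and then the prefactor $P(T_\beta=0)=O(e^{-Ku})$ makes the whole zone super-polynomially small. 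This is exactly what equations~\eqref{eq216}--\eqref{eq218} accomplish. Without that oscillatory integration by parts (or some equivalent device that makes sense of the improper integral), Zone IV is a genuine gap.
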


\begin{proof}
This proof is inspired by the proof of Theorem 2.1 in \cite{H-T}, though we 
we cannot use formulas involving $\hat{\rho}(s)$, and associated quantities, 
that play a significant role in their treatment.

Write
\begin{equation}\label{eq205}
u=ET_\beta, \ \ \sigmabeta^2 = {\rm var}\, T_\beta,
\ \ \alpha=1/\sigmabeta.
\end{equation}
Since $T_\beta$ has an atom at 0, $Y=\alpha\left(T_\beta-u\right)$
inherits an atom at $-\alpha u$, with probability mass
\[
P\left(T_\beta=0\right)=e^{-\int_0^1 e^{\beta x} g(x) dx}.
\]
Therefore we can calculate the characteristic function of $Y$ as
\[
\varphi(t):=Ee^{itY}=P(T_\beta=0)e^{-it\alpha u}
+\int_{-\infty}^\infty e^{ity}f_Y(y)dy.
\]
But since by Lemma~\ref{p2}(a) we also have
\begin{align*}
\varphi(t)&=Ee^{it(\alpha T_\beta-\alpha u)}\\ &=e^{C_0(\beta +i
  \alpha t)-C_0(\beta)-i \alpha t u}
\end{align*}
we find that the Fourier transform of $f_Y$ takes the form
\[
\hat{f_Y}(t) =\int_{-\infty}^{\infty}e^{-i t y}f_Y(y)dy=e^{C_0(\beta-i
  \alpha t)-C_0(\beta)+i \alpha t u}-P(T_\beta=0) e^{i\alpha t u}.
\]
Since $f_Y$ possesses discontinuities in the interval $\left[-\alpha
  u, 1-\alpha u\right]$, $\hat{f_Y}$ cannot be an $L^1$ function.
Nevertheless, we have
\[
f_Y(y)=\lim_{A \rightarrow \infty} \frac{1}{2\pi} \int_{-A}^{A}
e^{iyt} \hat{f_Y}(t)dt
\]
\begin{equation}\label{eq206}
=\lim_{A \rightarrow \infty} \frac{1}{2\pi} \int_{-A}^{A}
e^{iyt}\left\{e^{C_0(\beta-i \alpha t)-C_0(\beta) +i \alpha t
  u}-P(T_\beta=0)e^{i \alpha t u}\right\} dt,
\end{equation}
valid wherever $f_Y$ is differentiable.  But from Lemma~\ref{p4}
and Formulas~\eqref{eq2035} and \eqref{eq2036}, we know that for any
$y$, $f_Y^{\prime}(y)$ exists for $\beta$ sufficiently large, and this
is all we will need.  (See \cite[Chapter 6, Section 5]{Walker},
particularly Theorem~5.13 on page 243, for the applicable inversion
formula.)

We now proceed to evaluate the right hand side of \eqref{eq206}, for finite
$\beta$.  We partition the domain of integration, in \eqref{eq206},
into four concentric zones around $t=0$ and work on them separately.
Choose and fix $r > 3\pi^2/4$.  The zones are
\begin{align*}
& \textrm{Zone 0: $-R(u)\leq t\leq R(u)$, where $R(u)=\sqrt{r\log u}$}.
\\ & \textrm{Zone 1:
    $\left[-\pi \sigmabeta, -R(u)\right] \cup \left[R(u), \pi
      \sigmabeta \right]$}\\ & \textrm{Zone 2: $\left[-\beta
      \sigmabeta^2, -\pi \sigmabeta\right] \cup \left[\pi \sigmabeta,
      \beta \sigmabeta^2 \right]$}\\ & \textrm{Zone 3: $\left[-A,
      -\beta \sigmabeta^2\right] \cup \left[ \beta \sigmabeta^2,
      A\right]$ (where $A \rightarrow \infty$).}
\end{align*}

\begin{prop}\label{p5}
The contributions to \eqref{eq206} from the above four zones, are, respectively,
\begin{align*}
\textrm{Zone } 0&: \frac{1}{\sqrt{2
    \pi}}e^{-y^2/2}+O\left(\frac{1}{\sqrt{u}}\right) \qquad \textrm{if
  $y \ne 0$};\\ 
 &\ \ \frac{1}{\sqrt{2
    \pi}}+O\left(\frac{1}{u}\right) \qquad \textrm{if
  $y=0$}.\\
 \textrm{Zone } 1 &: o\left(\frac{1}{u}\right). 
\\ 
\textrm{Zone } 2&:
o\left(\frac{1}{u^k}\right) \qquad \textrm{for any
  $k>0$.}\\
 \textrm{Zone } 3&: o\left(\frac{1}{u^k}\right) \qquad
\textrm{for any $k>0$.}
\end{align*}
\end{prop}

Because the integral is the sum of the four contributions,
this 
will prove Proposition \ref{new main prop}.
 \vspace{1pc}

\noindent \emph{Proof of Proposition~\ref{p5}.\ }
For the first three zones we can neglect the term
\[
P\left(T_\beta=0\right) e^{i \alpha t u} 
\]
in the integrand of \eqref{eq206}.  In fact, the following is true:  

\begin{lem}
We have
\[
P\left(T_\beta=0\right) \int_{-\beta \sigmabeta^2}^{\beta \sigmabeta^2} e^{i(y+\alpha u)t} dt=O\left(e^{-K u}\right) \textrm{for some $K>0$}.
\]
\end{lem}

\begin{proof}
The integral is absolutely bounded by $2 \beta \sigmabeta^2$, and by
Lemmas~\ref{p2} and~\ref{p3}
\[
P(T_\beta=0)\beta \sigmabeta^2 \leq K^\prime e^{-K^\prime u}u^2 \log u
\]
for some $K^\prime>0$; so let $K=K^\prime/2$.\end{proof}

\noindent \textit{Remark}:  We do  not actually need
the boundedness assumption on
$g(x)$ in the analysis of the first three zones---just the  finiteness of
$ET_0$ will do. We most definitely use the boundedness in Zone $3$,
however, and at present see no way to dispense with that requirement, or
something close to it.
\\

\noindent \textbf{Zone 0:} We will show that modulo the neglected
$O\left(e^{-k u}\right)$ term we have
\begin{equation}\label{eq208}
\frac{1}{2 \pi} \int_{-R(u)}^{R(u)} e^{iyt} \hat{f}_Y(t)dt=\frac{1}{2
  \pi}\int_{-R(u)}^{R(u)} e^{iyt}\left[e^{C_0(\beta -i \alpha
    t)-C_0(\beta)+i \alpha t u}\right] dt.
\end{equation}
Since $u=\int_0^1 xe^{\beta x} g(x) dx$, by Taylor's theorem we may
write
\begin{align} 
C_0\left(\beta-i \alpha t\right)&-C_0(\beta) + i\alpha t u=\int_0^1
\left\{e^{(\beta-i\alpha t)x}-e^{\beta x}+i \alpha t x e^{\beta
  x}\right\} g(x) dx \label{eq209} \\ &=\int_0^1 e^{\beta x}
\left\{\frac{-\alpha^2 t^2 x^2}{2}+\frac{i^3 \alpha3 t^3
  x^3}{3!}+\alpha^4 t^4 x^4\cdot O(1)\right\} g(x) dx \label{e31}
\end{align}
as $\alpha t x \rightarrow 0$. Since $\alpha R(u)\leq
K\sqrt{\frac{\log u}{u}}$ for some $K>0$, and $|x|\leq 1$, we do have
$\alpha t x \rightarrow 0$ as $\beta \rightarrow \infty$, uniformly in
$t$ within Zone 0.  So by Lemma~\ref{p2}, \eqref{eq209} becomes
\begin{equation}\label{eq210}
-\frac{t^2}{2}+\alpha t^3 O_3(1) +\alpha^2 t^4 O_4(1)
\end{equation}
where $O_3(1)$ is independent of $t$, not just uniform.  Then Taylor's
theorem applied again tells us
\begin{equation}\label{eq211}
e^{C_0(\beta- i \alpha t)-C_0(\beta)+i \alpha t u}=e^{-t^2/2}
\left(1+\alpha t^3 O_3(1)+\alpha^2
t^4 O_4(1)\right)+O\left(\left[\alpha t^3 +\alpha^2
  t^4\right]^2\right).
\end{equation}
Since $\alpha R^3(u)\leq K\sqrt{\frac{\log^3 u}{u}} \rightarrow 0$ as
$u \rightarrow \infty$ and $\alpha^2 R^4(u) \leq K \frac{\log^2 u}{u}
\rightarrow 0$ as $u \rightarrow \infty$, the remainder terms will
die.

Inserting the terms of \eqref{eq211} into \eqref{eq208}, one by one,
we find, first
\begin{equation}\label{eq2115}
\frac{1}{2 \pi}\int_{-R(u)}^{R(u)} e^{i y t} e^{-t^2/2}
dt=\frac{1}{\sqrt{2\pi}} e^{-y^2/2}+O\left(\frac{1}{R(u)}
e^{-R^2(u)/2}\right)
\end{equation}
\[
=\frac{1}{\sqrt{2 \pi}}e^{-y^2/2} +O\left( \frac{1}{\sqrt{\log u}}
\frac{1}{u^{r/2}}\right);
\]
since $r>2$, the remainder is subdominant.  (See the remarks in
the analysis of Zone 1 concerning the size of $r$.)

Next, if $y=0$, then $\frac{1}{\sqrt{2 \pi}}\int_{-R(u)}^{R(u)}
e^{iyt} e^{-t^2/2} \alpha t^3 dt \cdot O_3(1)=0$ by symmetry.
Otherwise, since all moments of $e^{-t^2/2}dt$ are finite, this term
contributes $\alpha O(1)=O\left(\frac{1}{\sqrt{u}}\right)$.
Similarly, the $t^4$ term, as well as all subsequent terms, can
contribute at most $O(\alpha^2)=O\left(\frac{1}{u}\right)$.

This completes the analysis in Zone 0.  For Zones 1 and 2, we need a
lemma adapted from the analysis of inequalities (2.20) in \cite{H-T}, in which our $C_0(\beta)$
plays the role of their $E(\xi)$.  

\begin{lem}\label{L2}
For $\tau$ real, let
\[
H(\tau)=C_0(\beta) -\re \left\{C_0(\beta-i \tau)\right\}.
\]
Then
\begin{enumerate}
\item[a)] For $|\tau| \leq \pi, H(\tau) \geq \frac{2
  \tau^2}{\pi^2}\sigmabeta^2 $
\item[b)] For $|\tau| \geq \pi $ and $\beta$ sufficiently large,
\[
H(\tau)>\frac{\epsilon \pi^2}{8} \frac{e^\beta}{\beta^3},
\]
where $\epsilon$ is the lower bound for $g(x)$ near $x=1$.
\end{enumerate}
\end{lem}

\begin{proof}

\begin{enumerate}
\item[a)] \begin{align*} H(\tau)&=\int_0^1 \left(e^{\beta
    x}-1\right)g(x)dx -\int_0^1\left(e^\beta \cos \tau x-1\right) g(x)
  dx\\ &=\int_0^1 e^{\beta x} \left( 1- \cos \tau x\right) g(x)
  dx\\ &\geq \int_0^1 e^{\beta x} \frac{2 \tau^2 x^2}{\pi^2} g(x)
  dx\\ &=\frac{2 \tau^2 }{\pi^2} \sigmabeta^2 \ .
\end{align*}
\item[b)]
\begin{align*}
H(\tau)&=\int_0^1 e^{\beta x} \left(1-\cos \tau x\right) g(x)
dx\\ &\geq \epsilon \int_{1-\epsilon}^1 e^{\beta x} \left(1-\cos \tau
x\right) dx\\ &=\re \left\{ \epsilon \int_{1-\epsilon}^1 e^{\beta x}
\left( 1-e^{i\tau x}\right)dx
\right\}\\ &=\frac{\epsilon}{\beta}\left(e^\beta-e^{(1-\epsilon)\beta}\right)-\re\left\{
\left[ \frac{\epsilon e^{i\theta}}{|\beta + i \tau|}\right] \left[
  e^{\beta + i\tau
  }-e^{(1-\epsilon)(\beta+i\tau)}\right]\right\}\\ &\qquad
\left(\textrm{where $\theta= \arg \frac{1}{\beta +
    i\tau}$}\right)\\ &\geq
\frac{\epsilon}{\beta}\left(e^\beta-e^{(1-\epsilon)
  \beta}\right)-\epsilon
\frac{e^\beta+e^{(1-\epsilon)\beta}}{|\beta+i\tau|}\\ &=\frac{\epsilon
  e^\beta}{\beta}\left[1-e^{-\epsilon \beta}-\frac{1+e^{-\epsilon
      \beta}}{\sqrt{1+\pi^2/\beta^2} } \right] \\ &\geq \frac{\epsilon
  e^\beta}{\beta}\left[1-e^{\epsilon \beta}-\left(1+e^{-\epsilon
    \beta}\right)
  \left(1-\frac{\pi^2}{4\beta^2}\right)\right]\\ &=\frac{\epsilon
  e^\beta}{\beta}\left[\frac{\pi^2}{4 \beta^2}-\left(2-\frac{\pi^2}{4
    \beta^2}\right) e^{-\epsilon \beta}\right]\\ &>\frac{\epsilon
  e^\beta}{\beta}\cdot \frac{\pi^2}{8 \beta^2}
\end{align*}
for $\beta$ sufficiently large.  

\end{enumerate} \end{proof}

Now we do the Zone 1 and Zone 2
estimates.  Both involve straightfoward estimates.
\\

\noindent \textbf{Zone 1:} For the upper half of Zone 1 we write
\begin{align*}
&\left| \int_{R(u)}^{\pi \sigmabeta}
  e^{iyt}\left\{e^{C_0(\beta-i\alpha \tau)-C_0(\beta)+i\alpha \tau
    u}\right\}dt\right|\\ &\leq \int_{R(u)}^{\pi \sigmabeta}
  e^{-H(\alpha t)}dt\\ &=\sigmabeta \int_{R(u)/\sigmabeta}^\pi
  e^{-H(\tau)}d\tau\\ &\leq \sigmabeta \int_{R(u)/\sigmabeta}^{\pi}
  e^{-\frac{2 \sigmabeta^2}{\pi^2}\tau^2}d \tau\\ &\leq \pi \sigmabeta
  e^{-2R(u)^2/\pi^2}\\ &=\frac{\pi
    \sigmabeta}{u^{2r/\pi^2}}=O\left(\frac{1}{u^{2r/\pi^2-1/2}}\right)=o\left(\frac{1}{u}\right)\
\end{align*}
since $r>\frac{3\pi^2}{4}$ and $\sigmabeta =O\left( \sqrt{u} \right)$.
(If $r> (2k+1)\pi^2/4$, we can even get $o\left(\frac{1}{u^k}\right)$
for any desired $k$.)  The estimate for the lower half is the same.
\\

\noindent \textbf{Zone 2:} For the upper half we write
\begin{align*}
&\left| \int_{\pi \sigmabeta}^{\beta \sigmabeta}
  e^{iyt}\left\{e^{C_0(\beta-i\alpha \tau)-C_0(\beta)+i\alpha \tau
    u}\right\}dt\right|\\ &\leq \int_{\pi \sigmabeta}^{\beta
    \sigmabeta} e^{-H(\alpha t)}dt\\ &=\sigmabeta \int_{\pi}^{\beta
    \sigmabeta} e^{-H(\tau)} d\tau\\ &\leq \sigmabeta \int_\pi^{\beta
    \sigmabeta} e^{-\frac{\epsilon
      \pi^2}{8}\frac{e^\beta}{\beta^3}}d\tau\\ &\leq \beta \sigmabeta
  e^{-\frac{\epsilon \pi^2}{8}\frac{e^\beta}{\beta^3}} \qquad
  \textrm{for $\beta$ sufficiently large},\\ &\leq e^{-u^{1-\delta}}
  \qquad \textrm{for some $\delta <1$, by Lemmas~\ref{p2}
    and~\ref{p3}},
\end{align*}
which is $o\left(\frac{1}{u^k}\right)$ for any $k>0$.

Since once again the estimate of the lower half is the same, this completes the
analysis for Zone~2.

\noindent \textbf{Zone 3:} Because $P\left(T_\beta=0\right)=e^{-\int_0^1
  e^{\beta x} g(x) dx}$,  the integral \eqref{eq206} in  the upper half of Zone 3 is 
\begin{equation}\label{30} 
P\left(T_\beta=0\right)\lim_{A \rightarrow \infty}
\frac{1}{2\pi}\int_{\beta \sigmabeta^2}^A e^{i(y+\alpha
  u)t}\left\{e^{\int_0^1 e^{(\beta-i \alpha t) x} g(x)
  dx}-1\right\}dt.
\end{equation}
By Lemma~\ref{p3},
\begin{equation}\label{eq2125}
P\left(T_\beta=0\right)=O\left(e^{-Ku}\right)
\end{equation}
for some $K>0$.  As for the rest of (\ref{30}), for fixed $\beta$ we
have
\[
\lim_{t \rightarrow \infty} \int_0^1 e^{(\beta-i\alpha t)x}g(x)dx=0,
\]
by the Riemann--Lebesgue lemma.  So, setting
\begin{equation}\label{eq213}
I=I\left(\beta,t\right)=\int_0^1 e^{(\beta-i\alpha t)x} g(x)dx,
\end{equation}
this encourages us to write
\begin{equation}\label{eq214}
e^{I(\beta,t)}-1=I(\beta,t)+O\left(I^2\left(\beta,t\right)\right)
\end{equation}
and substitute \eqref{eq214} for the braced expression in
(\ref{30}).   
But we need more detailed information about $I(\beta, t)$ before we
can exploit the oscillatory factor $e^{i(y+\alpha u)t}$ in (\ref{30}). 

\begin{lem}\label{L3}
There are finitely many numbers
\[
0\leq a_0<a_1<\ldots<a_L=1
\]
and
\[
C_0,\ldots,C_L,
\]
with $C_L \ne 0$, such that
\begin{equation}\label{eq215}
I(\beta,t)=\sum_{j=0}^L C_j \frac{e^{a_j(\beta-i\alpha
    t)}}{\beta-i\alpha t}+O\left(\frac{\sigmabeta^2 e^{2
    \beta}}{t^2}\right).
\end{equation}
\end{lem}

\begin{proof} 
This is just integration by parts.  Let
\[
0\leq a_0<a_1\cdots<a_L=1
\]
be the discontinuity points of $g$.  The contribution to $I$ from the
subinterval $\left[a_j,a_{j+1}\right]$ is
\[
\int_{a_j}^{a_{j+1}} e^{(\beta-i\alpha t)x}g(x)dx
\]
\[
=\frac{1}{\beta-i\alpha t}\left(g(a_{j+1})e^{(\beta-i\alpha
  t)a_{j+1}}-g(a_j)e^{(\beta- i \alpha t)a_j}\right) -\frac{1}{\beta-i
  \alpha t} \int_{a_j}^{a_{j+1}} e^{(\beta-i\alpha t)x} g^{\prime}
(x)dx
\]
where $g\left(a_j\right)$ and $g\left(a_{j+1}\right)$ are evaluated as
one-sided limits, where necessary, and
\[
\frac{1}{(\beta-i\alpha t)}\int_{a_j}^{a_{j+1}}e^{(\beta-i \alpha
  t)x}g^\prime (x) dx =O\left(\frac{\sigmabeta^2 e^\beta}{t^2}\right)
\]
via another integration by parts.  (This is where we use the
hypothesis that $g$ is piecewise $C^2$!)  Now collect all the terms
from all the subintervals.   This completes the proof of 
Proposition  \ref{new main prop}.
\end{proof}

Having Lemma \ref{L3}, 
we can use \eqref{eq215} as follows.  For $a \leq
1$ and for any fixed $y$ we have

\begin{align}\label{eq216}
&\lim_{A \rightarrow \infty} \int_{\beta \sigma_{\beta}^2 }^A
  e^{i(y+\alpha u)t} \frac{e^{a(\beta-i \alpha t)}}{\beta-i \alpha t}
  dt\\ &=\lim_{A\rightarrow \infty}\sigma_{\beta} e^{a \beta}
  \int_{\beta \sigma_{\beta}}^A \frac{e^{ict}}{\beta-it}dt\qquad
  (\textrm{where $c=y/\alpha+u-a$}) \\ &=\lim_{A \rightarrow \infty}
  \sigma_{\beta} e^{a \beta} \left\{\frac{e^{ict}}{ic(\beta-it)}
  \mathop{\bigg|}\nolimits_{\beta \sigma_{\beta}}^A +\int_{\beta
    \sigmabeta}^A
  \frac{e^{ict}}{c(\beta-it)^2}dt\right\}\\ &=\sigma_{\beta} e^{a
    \beta} O\left(\frac{1}{c(\beta-i \beta
    \sigma_{\beta})}\right)\\ &=O\left(\frac{\sigma_{\beta}
    e^\beta}{\beta u^{3/2}}\right)=O(1)
\end{align}
using Lemmas~\ref{p2} and~\ref{p3}.

Also, looking at the big-$O$ term in \eqref{eq215} gives
\begin{equation}\label{eq217}
\lim_{A \rightarrow \infty} \int_{\beta \sigmabeta^2}^A e^{i(y+\alpha
  u)t} O\left(\frac{\sigmabeta^2 e^{2
    \beta}}{t^2}\right)dt=O\left(\frac{e^{2 \beta}}{\beta}\right).
\end{equation}
Further, a little algebra with \eqref{eq215} shows that
\[
I^2(\beta,t)=O\left(\frac{\sigmabeta^4 e^{4 \beta}}{t^2}\right)
\]
is a very generous bound.  This gives
\begin{equation}\label{eq218}
\lim_{A \rightarrow \infty} \int_{\beta \sigma_{\beta}^2}^A
e^{i(y+\alpha u)t} O\left(I^2(\beta,t)\right)
dt=O\left(\frac{\sigmabeta^2 e^{4 \beta}}{\beta}\right).
\end{equation}
Obviously, \eqref{eq218} is the dominant bound. The quantity in \eqref{eq216} is so
small because it ``sees'' the oscillation.

Finally, substituting \eqref{eq214} into \eqref{30} and using the
bounds just calculated yields a bound of $O\left(e^{-Ku}
\frac{\sigmabeta^2 e^{4 \beta}}{\beta}\right)$ for Zone 3, which is
certainly $o\left(\frac{1}{u^k}\right)$ for any $k$.

The lower half of the zone is handled in exactly the same way.  This
completes the proof of Proposition \ref{new main prop}.

\end{proof}

\section{Proof of Theorem \ref{new theorem 1}}\label{5}

\begin{proof}
Given Proposition \ref{new main prop} the proof of Theorem \ref{new theorem 1}
is relatively straightforward.  Let
$$ C_g(\beta)=\int_0^1 \left(e^{\beta x}-1\right)g(x)dx
$$ and, given $u>0$, let $\beta=\beta(u)$ be the solution of
$$
  C_g'(\beta)=\int_0^1 x \, e^{\beta x} \, g(x) \ dx = u.
$$
By  Lemmas~\ref{p1},~\ref{p2}(b)   and~\ref{p3}(b) when $\beta=\beta(u)$
we then have 
$u=E_\beta := E T_\beta$, while
the variance $\sigmabeta^2=$ {\rm var} $ T_\beta$ satisfies both
$\sigma^2_\beta = 1/\beta'(u)$, and 
$ \sigmabeta^2 \sim u$.
As usual we define $\sigmabeta$ to be the positive square root.

Since $u=E_\beta \rightarrow \infty$ as $\beta \rightarrow \infty$, for the inverse function we also have $\beta(u)\rightarrow \infty$ as $u \rightarrow \infty$.  Just as in the Dickman case, discussed in Section~\ref{1}, we have 
\begin{equation}\label{eq2035}
f_Y(y)=\sigmabeta f_\beta \left(\sigmabeta y+u\right),
\end{equation} 
and, with $Ee^{\beta T_0} =\exp(\int_0^1 \left(e^{\beta
  x}-1\right)g(x)dx)=\exp(C_g(\beta))$
we have
\begin{equation}\label{eq2036}
f_\beta(t)=e^{\beta t}f(t)/\exp(C_g(\beta)).
\end{equation} 
We combine the two equations above, to express the density $f$ of $T_0$ at
the point $t=u+\sigmabeta y$ 
which is, for $T_\beta$, $y$ standard deviations above its
mean, $u$:
\begin{eqnarray}\label{untilt}
 f(u + \sigmabeta \, y)& = &e^{C_g(\beta)-u\beta } e^{-\beta
  \sigmabeta y} \ f_\beta(u + \sigmabeta \, y) \\
  &=&
\label{nice form density}
 e^{C_g(\beta)-u\beta } e^{-\beta
  \sigmabeta y}\frac{1}{\sigmabeta} f_Y(y).
\end{eqnarray}

We now complete the proof of~\eqref{eq2032} by taking $y=0$
in~\eqref{nice form density} and then  using~\eqref{eq203}.

To prove \eqref{tail asymp}, we combine~\eqref{eq202}  
error term, i.e., the standard  
$f_Y(y)$, with~\eqref{untilt} and \eqref{nice form density}, to find
\begin{equation}
\p( T_0 \ge u ) = \sigmabeta \int_0^{\infty} f(u+\sigmabeta y)\ dy
\end{equation}
\begin{equation}\label{step2}
=
 \frac{e^{C_g(\beta)-u\beta}}{\sqrt{2 \pi}} \int_0^{\infty} e^{-\beta
   \sigmabeta y} \left[e^{-y^2/2} + O(1/\sqrt{u}) \right] dy.
\end{equation}

The main term of the integral in~\eqref{step2} is $\int_0^{\infty}
e^{-\beta \sigmabeta y}\, e^{-y^2/2} \, dy = 1/(\beta \sigmabeta) (
1+O(1/u))$; we see this approximation in two steps. The upper bound is
simply $\int_0^{\infty} e^{-\beta \sigmabeta y}\, e^{-y^2/2} \, dy <
\int_0^\infty e^{-\beta \sigmabeta y} \ dy = 1/(\beta \sigmabeta)$.  A
lower bound, for any $0<d$, is $\int_0^{\infty} e^{-\beta \sigmabeta
  y}\, e^{-y^2/2} \, dy \ge e^{-d^2/2} \int_0^d e^{-\beta \sigmabeta
  y}\ dy = 1/(\beta \sigmabeta) e^{-d^2/2}(1-e^{-d \beta \sigmabeta})
$.  We can use $d=1/\sigmabeta$, so that
$e^{-d^2/2}=1-O(1/\sigmabeta^2)=1-O(1/u)$ and $(1-e^{-d \beta
  \sigmabeta}) =1-e^{-\beta}=1-o(1/u)$, using Lemma~\ref{p2}(b).

To bound the error term in  the integral in~\eqref{step2}, use
the uniformity of the error term in~\eqref{eq202}.  After
applying absolute value and taking the absolute value inside, we have
an upper bound of the form, with some  fixed $K< \infty$, 
$\int_0^{\infty} e^{-\beta
 \sigmabeta y} K/\sqrt{u} \ dy = 
 1/(\beta \sigmabeta) \times 
O(1/\sqrt{u}).$  This completes the proof of~\eqref{tail asymp}.   
\end{proof}

\section{Proof of Theorem \ref{new theorem 2}}\label{sect end run}

We will actually derive Theorem \ref{new theorem 2} as a corollary of 
Theorem \ref{new theorem 1}  
 by means of the following strategy:  given a density $g$, satisfying the hypotheses of 
Theorem \ref{new theorem 2},
which blows up at $0+$, consider, for some fixed $0<a<1$, the Poisson
process restricted to $[a,1]$.  Write $T^\mrk2$ for the sum of
  arrivals
in $[a,1]$;  this is a process to which both
Lemma \ref{p4} and Proposition \ref{new main prop}  apply.

The sum $T^\mark1$ of arrivals in $(0,a)$ is independent of $T^\mrk2$;
we have $T=T^\mark1 + T^\mrk2$, and $T^\mark1$ is
relatively small.  The distribution of $T$ is given by convolving
the distribution of $T^\mark1$, over which we have relatively little
control,
with the distribution of $T^\mrk2$,  which has an atom at 0, and a
density
well-controlled by the above-mentioned results.  
What, then, can we say about the density of $T$?  Since $T^\mrk2$ has an
atom at 0, and the density of $T^\mark1$ may be unbounded at $0+$, a uniform
\emph{everywhere} approximation for the density of $T$, such as \eqref{eq202}
is not possible---it fails at $y=-u/\sigmabeta$.  But to prove
our result, 
we will only need the uniform
approximation at $y \ge 0$.

It was clearly necessary to require that 
$\int_0^1 x g(x) \ dx < \infty$, 
so that $T$ is a random variable with finite values; 
and we have, in fact, imposed 
a slightly more restrictive hypothesis, namely that 
$ \sup_{0<x \le 1} x g(x) < \infty$.  This contrasts with 
Lemma~\ref{p4}, in which we allow $\int_0^1 g(x) \ dx$ to be either
finite or
infinite.

\begin{lem}\label{lemma size bias}
Suppose that $L := \sup_{0<x \le 1} x g(x) < \infty$.  Then $T^\mark1$ has a
(possibly defective) density $f$, with $\sup_{0<x \le 1} x f(x) \le L
< \infty$.
\end{lem}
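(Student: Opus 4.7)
The plan is to invoke the size-bias identity for sums of Poisson arrivals from \cite{AGK}: for a random variable $T$ given by summing all arrivals in a Poisson process on $(0,\infty)$ with intensity measure $\nu$ of finite mean $\mu := \int x\,\nu(dx)$, the size-biased variable $T^*$ is distributed as $X+T'$, where $T'$ is an independent copy of $T$ and $X$ is independent of $T'$ with law $x\,\nu(dx)/\mu$. The scheme has three parts: (i) show $X$ in the present setting has a bounded density; (ii) conclude that $T^*$ inherits a bounded density; and (iii) invert the size biasing to read off a density for $T^\mark1$ on $(0,\infty)$ satisfying the claimed bound.

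For step (i), the intensity measure of $T^\mark1$ is $g(x)\,dx$ on $(0,a)$. The hypothesis $xg(x) \le L$ forces $\mu = \int_0^a xg(x)\,dx \le La < \infty$, and identifies the density $xg(x)/\mu$ of $X$ as pointwise bounded by $L/\mu$ on $(0,a)$. For step (ii), I would appeal to the elementary fact that if $X$ has a density bounded by a constant $M$ and $Y$ is independent of $X$ with an arbitrary distribution, then $X+Y$ has a density bounded by $M$; this is immediate from the representation $f_{X+Y}(t) = \int d_X(t-y)\,P(Y \in dy)$. Applied with $Y = T'$, this yields a density $f_{T^*}$ for $T^*$ everywhere bounded by $L/\mu$.

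For step (iii), the defining relation of size biasing, $d\nu^*(t) = (t/\mu)\,d\nu(t)$ where $\nu, \nu^*$ are the laws of $T^\mark1, T^*$, inverts on $(0,\infty)$ to $d\nu(t) = (\mu/t)\,d\nu^*(t)$. Since $\nu^*$ is absolutely continuous on $(0,\infty)$ with bounded density $f_{T^*}$, the restriction of $\nu$ to $(0,\infty)$ is absolutely continuous with density $f(t) := (\mu/t)\,f_{T^*}(t)$, and $tf(t) = \mu\,f_{T^*}(t) \le L$ for every $t > 0$, covering in particular $(0,1]$. The possible atom of $T^\mark1$ at $0$ (present iff $\int_0^a g < \infty$) is exactly what renders the density ``possibly defective''.

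I do not anticipate a serious obstacle. The one subtle point, and the reason for using size biasing rather than convolutions, is that $\int_0^a g(x)\,dx$ may be infinite, which precludes a direct conditioning-on-arrival-count argument in the style of Lemma~\ref{p4}, since the number of arrivals in $(0,a)$ need not be a proper Poisson variable. The size-bias identity sidesteps this cleanly, as it requires only the first moment $\mu$ to be finite, which is exactly what the hypothesis $\sup_{0<x\le 1} xg(x) < \infty$ guarantees.
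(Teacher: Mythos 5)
Your proof is correct and takes essentially the same route as the paper: both invoke the size-bias identity $T^* \stackrel{d}{=} T^\mark1 + I$ (the paper's $I$ is your $X$, with density $xg(x)/c$), use the convolution to obtain a density $f^*$ for $T^*$, and invert $f^*(x) = xf(x)/c$ to read off the bound $xf(x) \le L$. The only cosmetic difference is the order of operations: the paper first inverts and then bounds the resulting convolution $xf(x) = \int f(x-z)\,zg(z)\,dz$ by $L\int f(x-z)\,dz \le L$, whereas you first bound $f_{T^*} \le L/c$ by observing that $I$ has density $\le L/c$ and that convolving a bounded density against any probability measure preserves the bound, and then invert. Your ordering is, if anything, marginally cleaner since it keeps the possible atom of $T^\mark1$ at $0$ safely inside the measure $P(T^\mark1 \in dy)$ rather than implicitly in a density-level convolution formula.
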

\begin{proof}
Since $T^\mark1$ is a sum of Poisson arrivals, its
size-biased distribution  
is that of a random variable $T^*$
which
satisfies
\[
  T^* = T^\mark1+I \mbox{ in distribution},
\]
with $T^\mark1,I$ independent  and $I$ having density $x g(x) /c$, where
$c=\int_0^1 x g(x) \ dx$---
see~\cite{AGK}. 
Since the distribution of $T^*$ is the convolution of a probability 
distribution, namely that of $T^\mark1$, with an absolutely continuous
distribution, namely that of $I$, the distribution of $T^*$ has a proper
probability density, say $f^*$.  We infer from the convolution equation $ T^* = T^\mark1+I$
that even if the distribution of $T^\mark1$ has an
atom at 0, it must also have a (possibly defective) density $f$ on
$(0,\infty)$
satisfying $x f(x)/c = f^*(x)$.  Then the convolution equation at the density level reduces,
after multiplication by a factor of $c$ on both sides, to
\[
     x f(x) = \int_0^1 f(x-z) z g(z) \ dz
\]
which implies that $\sup x f(x) \le \sup x g(x)$.
\end{proof}

We next give an upper bound for use with the ``relatively small''
contributions
from $T^\mark1_\beta$. 

\begin{lem}\label{easy ueub}
Given $ 0 < m < \infty$, let $\nu$ be any measure on $(0,1]$ such that 
$m= \int x \ d \nu$, and let $W$ be the sum of arrivals in the Poisson process 
$\pp(\nu)$, so that $EW=m$.  Uniformly over choices of $\nu$, the
  chance that $W$ exceeds twice its mean  decays exponentially fast
  relative to $m \to \infty$; in fact
\[
   P( W \ge 4 \, EW ) \le \exp(-m(4-e)).
\]
\end{lem}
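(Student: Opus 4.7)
The plan is to apply a Chernoff-type bound to $W$, choosing the tilting parameter $\theta=1$, and then use a simple elementary inequality that exploits the fact that $\nu$ is supported in $(0,1]$.

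First I would invoke Lemma~\ref{p1}(a), which (being valid for any L\'evy intensity with bounded support) gives the moment generating function
\[
   Ee^{\theta W} = \exp\!\left(\int_{(0,1]} (e^{\theta x}-1)\,d\nu(x)\right)
\]
for every real $\theta$. Next, I would specialize to $\theta=1$ and use the elementary inequality
\[
    e^x - 1 \le e\,x \qquad \text{for } x \in [0,1],
\]
which follows from the fact that $e-e^x \ge 0$ on $[0,1]$, so $1+ex-e^x$ is nondecreasing on $[0,1]$ with value $0$ at $x=0$. Integrating this bound against $d\nu$ gives
\[
   \int_{(0,1]} (e^x-1)\,d\nu(x) \;\le\; e \int_{(0,1]} x\,d\nu(x) \;=\; em,
\]
so $Ee^W \le e^{em}$.

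Finally I would apply Markov's inequality to the nonnegative random variable $e^W$:
\[
   P(W \ge 4m) \;=\; P(e^W \ge e^{4m}) \;\le\; e^{-4m}\,Ee^W \;\le\; e^{-4m}\,e^{em} \;=\; \exp(-m(4-e)).
\]
Since this chain of inequalities involves no quantity depending on $\nu$ other than $m=\int x\,d\nu$, the resulting bound is uniform over all such $\nu$, giving the claim.

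The estimate is essentially routine and presents no real obstacle; the only point that has to be noticed is that the boundedness of $\mathrm{supp}(\nu)$ by $1$ is exactly what converts the first moment $m$ into control on the exponential moment at $\theta=1$, via $e^x-1\le ex$ on $[0,1]$. Choosing $\theta=1$ rather than optimizing yields the stated simple constant $4-e$; one could sharpen the exponent (e.g.\ to $5-e$ using the chord bound $e^x-1\le(e-1)x$), but the stated form is all that is needed for later use.
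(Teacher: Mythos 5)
Your proposal is correct and takes essentially the same approach as the paper: bound $E e^W$ using $e^x - 1 \le e x$ on $(0,1]$, then apply Markov's inequality to $e^W$ at threshold $e^{4m}$. The paper's version is simply more compressed; your explicit justification of the elementary inequality and the remark about the sharper chord bound $e^x-1\le(e-1)x$ are both accurate but not needed.
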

\begin{proof}
Since $\nu$ is supported by (0,1], $E e^W = \exp( \int (e^x-1) d \nu)
\le$ $ \exp( \int e \, x \ d\nu) = \exp(em)$.  Hence
\[ 
 P( W \ge 4 \, EW )= P(e^W \ge e^{4m}) \le E e^W / e^{4m} \le
 e^{em}/e^{4m}.
\]
\end{proof}

We apply Lemma~\ref{easy ueub} to the random variables $T^\mark1_\beta$
to show that the chance of strictly exceeding 4 times the mean is $o(1/u)$, where $u$ is defined
by $\beta = \beta(u)$ ---
if $T^\mark1=0$ identically,  the probability is zero. Otherwise
there is some $\epsilon>0$ with $\int_{2 \epsilon}^a g(x) \ dx)>\epsilon$.
 When we tilt $T$ to get $E T_\beta=u$,  by
Lemma~\ref{p3}(b), we know that $\beta \sim \log u$, hence for
sufficiently large $u$ we have $\beta > .5 \log u$, and hence $E T^\mark1_\beta \ge
\epsilon^2 u^\epsilon$. The upper bound from  Lemma~\ref{easy ueub} 
then gives 
\begin{equation}\label{dag upper tail}
P( T^\mark1_\beta > 4 E T^\mark1_\beta) \le \exp(-(4-e)\epsilon^2
u^\epsilon)=
o(1/u). 
\end{equation}

 We can now prove Theorem \ref{new theorem 2}:
\begin{proof}[Proof of Theorem \ref{new theorem 2}]
As outlined in the second paragraph of this section, write 
 $T=T^\mark1 + T^\mrk2$  for the sum of arrivals in the Poisson process
 with
arrival intensity $g(x) dx$.  Write $T_\beta=T^\mark1_\beta + T^\mrk2_\beta$
for the same, after tilting.

We apply Proposition \ref{new main prop}  
to $T^\mrk2$, using the
 $\beta$  
for which $u = E T_\beta$. We must be careful
to note that the mean and variance used to standardize  $T^\mrk2_\beta$
are \emph{not} the mean $u$ and variance $\sigmabeta^2$ of $T_\beta$.
To emphasize this, we write
\[
  u^\mrk2 :=  E T^\mrk2_\beta = \int_a^1 x e^{\beta x} g(x) \ dx, \ \ \ 
\sigmabeta^{\mrk22} =   \int_a^1 x^2 e^{\beta x} g(x) \ dx,
\]
so that \eqref{eq201}, specifying the standardized version of 
$T^\mrk2_\beta$, is
\begin{equation}\label{Y ddag}
  Y =\frac{T^\mrk2_\beta -  u^\mrk2}{\sigmabeta^\mrk2}.
\end{equation}

If we write  
 
\[
  u^\mark1 :=  E T^\mark1_\beta = \int_0^a x e^{\beta x} g(x) \ dx, \ \ \ 
\sigmabeta^{\mark12}  = \int_0^a x^2 e^{\beta x} g(x) \ dx,
\]
so that $u=u^\mark1 + u^\mrk2$ and 
$\sigmabeta^2=\sigmabeta^{\mark12}+\sigmabeta^{\mrk22}$   then we see that
\begin{equation}\label{u ddag estimate}
u^\mrk2 = u - u^\mark1 = u - O(u^a) = u \ ( 1 + O(u^{a-1}))
\end{equation}
and
\[
\sigmabeta^{\mrk22}=\sigmabeta^2-\sigmabeta^{\mark12}= \sigmabeta^2-
O(u^a)
\sim u \ ( 1 + O(u^{a-1})),
\]
so
\begin{equation}\label{sigma ddag estimate}
\sigmabeta^{\mrk2}\sim \sqrt{u} \ ( 1 + O(u^{a-1})).
\end{equation}
We write $f^\mrk2_\beta$ for the (defective) density of
$T^\mrk2_\beta$, and $f_Y$ for the (defective) density for the $Y$ in
\eqref{Y ddag}.  The ordinary change of variables, together with 
Proposition \ref{new main prop}
give, uniformly in $y$,
\begin{equation}\label{f ddag}
f^\mrk2_\beta(x) = \frac{1}{\sigmabeta^{\mrk2}} 
f_Y\left(\frac{x-u^\mrk2}{\sigmabeta^{\mrk2}}\right) \mbox{ and }
f_Y(y)=\frac{1}{\sqrt{2 \pi}}e^{-y^2/2} 
+O\left( \frac{1}{\sqrt{u^\mrk2}}\right).
\end{equation}

Since  $T_\beta=T^\mark1_\beta + T^\mrk2_\beta$ with independent non-negative
summands, we have (see for example~\cite[page 356]{ash})  
\begin{equation}\label{f beta conv}
f_\beta(t) = P(T^\mrk2_\beta=0) f^\mark1_\beta(t) + 
\int f^\mrk2_\beta(t-z) dF^\mark1_\beta(z).
\end{equation}
[We took the integral $ dF^\mark1_\beta(z)$ rather than with
  $f^\mark1_\beta(z) \ dz$ since $T_\beta^\mark1$ may have an atom at 0.]
By Lemma~\ref{lemma size bias}, $\sup t f(t) \le L$, so for $t>1$,
$f(t) \le L$ and $f_\beta(t) \le L e^{\beta t}$.  The size of the atom
at 0 for $T^\mrk2_\beta$ is $\exp(-\int_a^1 e^{\beta x} g(x) \ dx)
\approx \exp(-u)$, so uniformly in $1 \le t \le u$ we have $
P(T^\mrk2_\beta=0) f^\mark1_\beta(t)=o(1/u)$, and this contribution
to \eqref{f beta conv} is 
negligible.

To complete the proof of~\eqref{eq2032 kludge} and \eqref{tail asymp kludge},
arguing as in the two paragraphs 
 following~\eqref{step2}, we will only need
approximations for $f_\beta(t)$ with $t \ge u$, and especially with
  $t \in [u,u+1]$.  Also, we will
use
Lemma~\ref{easy ueub} to bound the contribution from $
dF^\mark1_\beta(z)$ with $z \ge 4 u^\mark1$.
Hence, we consider $t \in [u,u+1]$ and $z \in [0,4 u^\mark1]$ so that
$
x := t-z \in [u - 4 u^\mark1,u+1]$, hence 
$$ x-u^\mrk2 \in
 [u-u^\mrk2 - 4 u^\mark1,u-u^\mrk2+1] = [- 3 u^\mark1,u^\mark1+1].
$$
Combining these bounds with \eqref{u ddag estimate}
and \eqref{sigma ddag estimate}, uniformly over
$t \in [u,u+1]$ and $z \in [0,4 u^\mark1]$, the argument
$y :=(x-u^\mrk2)/\sigmabeta^\mrk2$ to $f_Y$ in 
\eqref{f ddag} is $O(u^{a-.5})$.  Hence $y^2=O(u^{2a-1})$, which is
$O(1/\sqrt{u})$ using $a \le .25$.  Combining with the error term
written as $O(1/\sqrt{u^\mrk2})$, and  using $u^\mrk2 \sim u$, we
have,  uniformly over
$t \in [u,u+1]$
and $z \in [0,4 u^\mark1]$,
\begin{equation}\label{f ddag approx}
  f_\beta^\mrk2(t-z) = \frac{1}{\sqrt{2 \pi} \, \sigmabeta^\mrk2}
  (1+O(1/\sqrt{u})) 
=  
 \frac{1}{\sqrt{2 \pi} \, \sigmabeta}
  (1+O(1/\sqrt{u})).
\end{equation}
The second equality in \eqref{f ddag approx}, 
switching from $\sigmabeta^\mrk2$ to 
$\sigmabeta$ in the denominator, is justified by~\eqref{sigma ddag
  estimate}, 
together with $a \le .5$, to combine two
error terms into one.

The contribution to \eqref{f beta conv} from integrating over 
$z > 4 u^\mark1$ is $o(1/u)$, using~\eqref{dag upper tail} 
and the fact that $f^\mrk2$ is bounded by $\sup_{a\le x \le 1} g(x)$.
Finally, the contribution to~\eqref{f beta conv}
from integrating over 
$z \in [0, 4 u^\mark1]$ is $\frac{1}{\sqrt{2 \pi} \, \sigmabeta^\mrk2}
  (1+O(1/\sqrt{u}))$, using \eqref{f ddag approx}, and  using
 \eqref{dag upper tail} in the form $\p(T^\mark1_\beta \in [0,4
   u^\mark1])=1-O(1/u)$.  Thus we have proved that uniformly over $t \in [u,u+1]$
\begin{equation}\label{almost done}
   f_\beta(t) =  \frac{1}{\sqrt{2 \pi} \, \sigmabeta}
  (1+O(1/\sqrt{u})).
\end{equation}

A similar argument, now for $t \ge u$ instead of for $t \in [u,u+1]$,
shows that uniformly over $t \ge u$, $
 \sigmabeta f_\beta(t) \le   1+O(1/\sqrt{u})$. 

We apply \eqref{untilt}, 
and argue as we did following~\eqref{nice form density}, 
to complete the proof of~\eqref{eq2032 kludge} and \eqref{tail asymp kludge}.
\end{proof}

\section{A Concluding Conjecture}\label{sect conclusions}

In our results we have assumed that the measure $\mu$ had a density
$g(x)$ satisfying certain regularity conditions.  In fact we
conjecture that this result can be considerably weakened.

\begin{conjecture}\label{conjecture 1}
Let $\mu$ be any measure on $(0,1]$ such that $\int x \ d \mu <
  \infty$.  Allow $\mu$ to have atoms and a continuous singular part,
  but require that the \emph{absolutely continuous} part of $\mu$, say
  $g(x) dx$, be nontrivial, in the sense that for some $0 \le a < b
  \le 1$ and $\varepsilon >0$ we have $g(x) \ge \varepsilon$ for all
$x \in (a,b)$.  Then the tilted sum of arrivals, $T_\beta$, 
as given by \eqref{cramer}
has standardized version $Y$ with density satisfying
\begin{equation}\label{eq202 weaker}
f_Y(y)\to \frac{1}{\sqrt{2 \pi}}e^{-y^2/2}
\end{equation}
for all $y$, 
similar to~\eqref{eq203} in Proposition \ref{new main prop},  
but we make no
assertion about a rate of convergence.
Hence   $T$  as given by \eqref{C from mu},
has (possibly defective) density $f$ satisfying
\begin{equation}\label{eq2032 weaker}
f(u)= \sqrt{\frac{\beta^\prime (u)}{2
    \pi}}e^{C(\beta)-u\beta }\left(1+o\left(1 \right) \right)
\end{equation}
where
\[
C(\beta)=\int_0^1 \left(e^{\beta x}-1\right) \mu(dx),
\]
similar to \eqref{eq2032} in Theorem \ref{new theorem 1},  
 again with no assertion concerning a rate of convergence.
\end{conjecture}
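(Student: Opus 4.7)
The plan is to adapt the four-zone Fourier inversion argument of Proposition \ref{new main prop} to the weaker hypotheses of the conjecture. The strategy rests on two pillars: the bulk CLT (Theorem \ref{thm easy}) supplies pointwise convergence of the characteristic function $\varphi_Y$ of $Y := (T_\beta - u)/\sigma_\beta$ to $e^{-t^2/2}$, and the absolutely continuous piece of $\mu$ supplies enough control on $|\varphi_Y(t)|$ to dominate the Fourier inversion integrand, thereby enabling a dominated-convergence argument. I would begin by decomposing $\mu = \mu_1 + \mu_2$ with $\mu_1 := \varepsilon\,\mathbf{1}_{(a,b)}(x)\,dx$ and $\mu_2 := \mu - \mu_1 \ge 0$; by Poisson superposition $T_\beta = T_{1,\beta} + T_{2,\beta}$ with independent summands, and the absolutely continuous ingredient $\mu_1$ alone guarantees (via Lemma \ref{p4}) that $T_\beta$ has a density.

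First I would observe that Lemma \ref{L2}(a) goes through verbatim for an arbitrary measure $\mu$ supported on $[0,1]$ with finite first moment, since its proof uses only $1-\cos(\tau x) \ge 2\tau^2 x^2/\pi^2$ for $|\tau x| \le \pi$ and the definition $\sigma_\beta^2 = \int x^2 e^{\beta x}\,d\mu$. This yields the uniform-in-$\beta$ integrable majorant $|\varphi_Y(t)| \le \exp(-2t^2/\pi^2)$ on the inner window $|t| \le \pi\sigma_\beta$. Next, for $|t| > \pi\sigma_\beta$, I would invoke the factorization $\varphi_Y = \varphi_{Y_1'}\,\varphi_{Y_2'}$ (with $Y_i' := (T_{i,\beta}-u_i)/\sigma_\beta$) together with $|\varphi_Y| \le |\varphi_{Y_1'}|$; since $\mu_1$ is a bounded piecewise-$C^\infty$ density whose support has right endpoint $b$ (playing the role formerly played by $1$), the analogs of Lemma \ref{L2}(b) and Lemma \ref{L3} applied to $\mu_1$ alone deliver the required exponential decay and oscillatory bounds for the Zone 2 and Zone 3 analogs. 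Applying Fourier inversion
\[
  f_Y(y) \;=\; \frac{1}{2\pi}\lim_{A\to\infty}\int_{-A}^A e^{-iyt}\,\varphi_Y(t)\,dt,
\]
the inner-window portion converges to $\sqrt{2\pi}\,e^{-y^2/2}$ by dominated convergence (using pointwise CF convergence from Theorem \ref{thm easy}), and the outer-window portion is $o(1)$ by the Zone 2/3 estimates, giving $f_Y(y) \to \tfrac{1}{\sqrt{2\pi}}e^{-y^2/2}$ pointwise. The density statement \eqref{eq2032 weaker} then follows via the untilting identity used in the proof of Theorem \ref{new theorem 1}.

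The main obstacle I foresee is the outer-window analysis when $\mu_2$ is genuinely singular: the integration-by-parts device of Lemma \ref{L3} requires a piecewise-smooth density and is unavailable for atoms or a continuous singular part of $\mu$. The resolution is precisely the factorization $|\varphi_Y| \le |\varphi_{Y_1'}|$, which collapses the tail control entirely to the artificially introduced $\mu_1$, where the smoothness-based machinery applies verbatim; the contribution of $\mu_2$ through $\varphi_{Y_2'}$ is automatically bounded by $1$ and never needs to be estimated from above in any quantitative way. A secondary caveat is that pointwise CF convergence from Theorem \ref{thm easy} supplies no rate, so the conclusion is qualitative, consistent with the conjecture's $(1+o(1))$ formulation and with the absence of any quantified error.
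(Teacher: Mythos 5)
This statement is labeled a \emph{Conjecture}; the paper explicitly leaves it open and offers no proof, so there is no paper argument to compare against. Your proposal is an attempt to settle it, and the basic template (decompose $\mu=\mu_1+\mu_2$, isolate an absolutely continuous sliver $\mu_1$, run a dominated-convergence version of the four-zone Fourier argument) is a sensible first thing to try. The inner-window dominated convergence step is sound: Lemma~\ref{L2}(a) really does extend verbatim to arbitrary $\mu$ supported in $[0,1]$, giving the $\beta$-uniform majorant $e^{-2t^2/\pi^2}$, and Theorem~\ref{thm easy} supplies the pointwise characteristic-function convergence.

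However, there is a genuine gap in your treatment of Zone 3. The claim that the factorization $|\varphi_Y|\le|\varphi_{Y_1'}|$ ``collapses the tail control entirely to $\mu_1$'' and that ``the contribution of $\mu_2$ $\dots$ never needs to be estimated from above in any quantitative way'' is not correct. Zone 3 is an unbounded region over which the inversion integral does \emph{not} converge absolutely: the magnitude $|\varphi_{Y_1'}(t)|$ is bounded away from zero, by roughly $e^{-M_1}$ with $M_1=\int e^{\beta x}\,d\mu_1$, uniformly in $|t|\ge\pi\sigma_\beta$ (the $1-\cos$ term oscillates but does not grow), so $\int_{|t|>\beta\sigma_\beta^2}|\varphi_{Y_1'}(t)|\,dt=\infty$. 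The paper's Zone~3 argument works precisely because it does not bound by modulus: it pulls out the oscillatory factor $e^{i(y+\alpha u)t}$, expands $e^I-1$, and integrates by parts (Lemma~\ref{L3} and~\eqref{eq216}). If you try to reproduce that here, the correct object to invert is $\hat f_Y(-t)=\hat f_{Y_1'}(-t)\,\varphi_{Y_2'}(t)$, and the integration by parts now differentiates $\varphi_{Y_2'}(t)$, whose derivatives involve $\int x\,e^{\beta x}d\mu_2$, $\int x^2 e^{\beta x}d\mu_2$, etc.; these are exactly the quantitative estimates on $\mu_2$ you claim to avoid. Whether the resulting bounds close up depends on the relative sizes of $\sigma_{1,\beta}$, $\sigma_\beta$, and $M_1$, none of which you control under the conjecture's hypotheses (e.g.\ if $\mu_2$ has an atom at $1$ and $b<1$, then $\sigma_\beta^2\sim e^\beta$ while $\sigma_{1,\beta}^2\sim e^{\beta b}/\beta$ is exponentially smaller, and the paper's choice of zone boundaries is no longer tuned correctly).

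A secondary gap: your inversion formula is written with the full characteristic function $\varphi_Y$, but when $\mu_2$ has atoms or a singular part, $Y$ itself carries atoms and singular mass (supported on the event $T_{1,\beta}=0$), and $\lim_A\int_{-A}^A e^{-iyt}\varphi_Y(t)\,dt$ then fails to converge for fixed $\beta$. One must subtract the entire non--absolutely-continuous part of $\varphi_Y$ first, which is $\varphi_{Y_2'}(t)\cdot P(T_{1,\beta}=0)e^{-it\alpha u_1}$, not just the single atom at $-\alpha u$ as in the paper. This reinforces that $\varphi_{Y_2'}$ cannot be treated as an inert factor of modulus at most one. A more promising variant of your strategy would be to take $\mu_1$ with a $C^\infty$ density supported in a compact subinterval of $(a,b)$, so that $\hat f_{Y_1'}$ actually decays rapidly and the inversion integral converges absolutely, eliminating Zone~3 entirely; but that is not what you have written, and the proposal as stated does not establish the conjecture.
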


\noindent \textit{Acknowledgement}: We are grateful to Charles Fefferman for
helpful comments concerning Lemma~\ref{p4}.




\end{document}